\DeclareMathOperator{\cod}{cod}
\DeclareMathOperator{\diag}{diag}
\DeclareMathOperator{\orb}{O}
\DeclareMathOperator{\tsp}{T}
\DeclareMathOperator{\nsp}{N}
\DeclareMathOperator{\tr}{trace}
\DeclareMathOperator{\GSYL}{GSYL}
\DeclareMathOperator{\rev}{rev}
\DeclareMathOperator{\POL}{POL}
\DeclareMathOperator{\PEN}{PENCIL}
\newcommand{\sdotsss}%
{\text{\raisebox{-2.2pt}{$\cdot\,$}%
 \raisebox{1.7pt}{$\cdot$}%
\raisebox{5.6pt}{$\,\cdot$}}}
\renewcommand{\le}{\leqslant}
\renewcommand{\ge}{\geqslant}
\newtheorem{theorem}{Theorem}[section]
\newtheorem{lemma}[theorem]{Lemma}
\newtheorem{definition}[theorem]{Definition}
\newtheorem{remark}[theorem]{Remark}
\newcommand{\hide}[1]{}
\begin{document}

\title{Generic matrix polynomials with fixed rank and fixed degree\tnoteref{t1}}

\tnotetext[t1]{Preprint Report UMINF 16.xx, Department of Computing Science, Ume\r{a} University}

\date{}

\author[um]{Andrii Dmytryshyn}
\ead{andrii@cs.umu.se}

\author[cm]{Froil\'an M. Dopico}
\ead{dopico@math.uc3m.es}

\address[um]{Department of Computing Science, Ume\aa \, University, SE-901 87 Ume\aa, Sweden.}

\address[cm]{Departamento de Matem\'aticas, Universidad Carlos III de Madrid, Avenida de la Universidad 30, 28911, Legan\'es, Spain.}

\begin{abstract}
The set ${\cal P}^{m\times n}_{r,d}$ of $m \times n$ complex matrix polynomials of grade $d$ and (normal) rank at most $r$ in a complex $(d+1)mn$ dimensional space is studied. For $r = 1, \dots , \min \{m, n\}-1$, we show that ${\cal P}^{m\times n}_{r,d}$ is the union of the closures of the $rd+1$ sets of matrix polynomials with rank $r$, degree exactly $d$, and explicitly described complete eigenstructures. In addition, for the full-rank rectangular polynomials, i.e. $r= \min \{m, n\}$ and $m \neq n$, we show that ${\cal P}^{m\times n}_{r,d}$ coincides with the closure of a single set of the polynomials with rank $r$, degree exactly $d$, and the described complete eigenstructure. These complete eigenstructures correspond to generic $m \times n$ matrix polynomials of grade $d$ and rank at most~$r$.
\end{abstract}

\begin{keyword}
complete eigenstructure\sep genericity\sep matrix polynomials\sep normal rank\sep orbits

\MSC 15A18\sep 15A21
\end{keyword}

\maketitle

\section{Introduction}

Describing a behaviour or form that certain objects have ``generically'' (typically) may be useful or, even, necessary for investigation of various problems, examples include generic solutions of partial and ordinary differential equations, as well as generic forms of linear and non-linear operators.
On the other hand, growing needs of solving and analyzing large scale problems demand a better understanding of low rank operators and their low rank perturbations.
A number of interesting and challenging problems lies in the intersection of these two research directions,  an obvious example is a problem of describing generic forms for operators with a low (bounded) rank.

We say that a dense and open subset of a space is generic, see also \cite{DeDo16}. One way to describe a generic set of various matrix (sub)spaces is by giving the possible eigenstructures that the elements of this set may have.
Generic eigenstructures for $n \times n$ matrices, matrix pencils, or matrix polynomials are well-known and consist only of simple eigenvalues (i.e. the eigenvalues whose algebraic multiplicities are one). Nevertheless, when matrix pencils or polynomials are rectangular, or known to be of a fixed (non-full) rank describing their generic complete eigenstructures becomes more difficult.
These problem for $m \times n$ complex matrix pencils has been extensively investigated: The generic Kronecker canonical forms (KCF) for full-rank rectangular ($m \neq n$) pencils are presented in \cite{DeEd95, EdEK99, vandoorenphd}, and the generic KCFs for pencils with the rank $r$, where $r = 1, \dots , \min \{m, n\}-1$, are obtained in \cite{DeDo08}. In this paper we solve the corresponding problems for matrix polynomials, i.e.
we find the generic complete eigenstructures of full-rank $m \times n$ complex matrix polynomials of grade $d$ and with $m \neq n$, and the generic complete eigenstructures of $m \times n$ complex matrix polynomials of grade $d$ and (normal) rank at most $r$, $r = 1, \dots , \min \{m, n\}-1$.
To be exact, for the set ${\cal P}^{m\times n}_{r,d}$ of singular $m \times n$ complex matrix polynomials of grade $d$ and rank at most $r$, we prove: if $r= \min \{m, n\}$ and $m \neq n$ then ${\cal P}^{m\times n}_{r,d}$ coincides with the closure of a single set of the polynomials with the described complete eigenstructure; if $r = 1, \dots , \min \{m, n\}-1$ then ${\cal P}^{m\times n}_{r,d}$ is the union of the closures of the $rd+1$ sets of matrix polynomials with described complete eigenstructures.

Our results have potential applications in studies of the ill-posed problem of computing the complete eigenstructure for a matrix polynomial. Small perturbations in the matrix entries can drastically change the complete eigenstructure and thus it may be useful to know the complete eigenstructure that the polynomials from a certain subset (in our case it is a subset of polynomials with bounded rank) are most likely to have.
Notably, that all possible changes of complete eigenstructures can be seen from so called closure hierarchy (stratification) graphs \cite{Dmyt15,DJKV15,DmKa14,EdEK97,JoKV13}, in particular, see \cite{EdEK99} and \cite{DJKV15} for the stratifications of matrix pencils and polynomials, respectively. Nevertheless, identification of the generic pencils or polynomials of a fixed rank does not immediately follow from the stratification graphs.

Another challenging and open problem for which the results of this paper may be useful is an investigation of generic low rank perturbations of matrix polynomials, an area where we only know the study of the particular perturbations considered in \cite{DeDo09}. Such lack of references on low rank perturbations of matrix polynomials is in stark contrast with the numerous references available in the literature on the changes of the complete eigenstructures of matrices and matrix pencils under generic low rank perturbations, both in the unstructured setting \cite{DeDo07,DeDo16,HoMe94}, as well as in the case of structured preserving perturbations \cite{Batzke14,Batzke15,Batzke16,MMRR11,MehlMRR12,MehlMRR13,MehlMRR14}.
This considerable interest on low rank perturbations comes from their applications and from the interesting theoretical problems they pose, which in the case of matrix polynomials are hard as a consequence of the nontrivial structure of the set of matrix polynomials with bounded rank and given grade.

The paper is organized as follows. Section \ref{pencils} presents classic and recent previous results that are needed to prove the main original results of this work, which are developed in Section \ref{sec.main}. The codimensions of the generic sets of matrix polynomials identified in Section \ref{sec.main} are determined in Section~\ref{sec.codim}.
All matrices that we consider have complex entries.

\section{Preliminaries}
\label{pencils}
We start by recalling the Kronecker canonical form of general matrix pencils $\lambda A + B$ (a matrix polynomial of degree one) under strict equivalence.

For each $k=1,2, \ldots $, define the $k\times k$
matrices
\begin{equation*}\label{1aa}
J_k(\mu):=\begin{bmatrix}
\mu&1&&\\
&\mu&\ddots&\\
&&\ddots&1\\
&&&\mu
\end{bmatrix},\qquad
I_k:=\begin{bmatrix}
1&&&\\
&1&&\\
&&\ddots&\\
&&&1
\end{bmatrix},
\end{equation*}
where $\mu \in \mathbb C,$ and for each $k=0,1, \ldots $, define the $k\times
(k+1)$ matrices
\begin{equation*}
F_k :=
\begin{bmatrix}
0&1&&\\
&\ddots&\ddots&\\
&&0&1\\
\end{bmatrix}, \qquad
G_k :=
\begin{bmatrix}
1&0&&\\
&\ddots&\ddots&\\
&&1&0\\
\end{bmatrix}.
\end{equation*}
All non-specified entries of $J_k(\mu), I_k, F_k,$ and $G_k$ are zeros.

An $m \times n$ matrix pencil $\lambda A + B$ is called {\it strictly equivalent} to $\lambda C + D$ if there are non-singular matrices $Q$ and $R$ such that $Q^{-1}AR =C$ and $Q^{-1}BR=D$. The set of matrix pencils strictly equivalent to $\lambda A + B$ forms
a manifold in the complex $2mn$ dimensional space.
This manifold is the {\it orbit} of $\lambda A + B$ under the
action of the group $GL_m(\mathbb C) \times GL_n(\mathbb C)$ on the space of all matrix pencils by strict equivalence:
\begin{equation} \label{equorbit}
\orb^e(\lambda A + B) = \{Q^{-1} (\lambda A + B) R \ : \ Q \in GL_m(\mathbb C), R \in GL_n(\mathbb C)\}.
\end{equation}
\hide{
The {\it dimension} of $\orb_{\lambda A + B}^e$
is the dimension of its tangent space
\begin{equation*}\label{taneqsp}
\tsp^e (\lambda A + B):=\{\lambda (XA-AY) + (XB - BY):
X\in{\mathbb
C}^{m\times m}, Y\in{\mathbb
C}^{n\times n}\}
\end{equation*}
at the point $A - \lambda B$, $\dim \tsp^e (\lambda A + B)$.
The orthogonal complement to $\tsp^e(\lambda A + B)$, with respect to the Frobenius inner product
\begin{equation}\label{innerprod}
\langle \lambda A+ B,\lambda C + D \rangle=\tr (AC^*+BD^*),
\end{equation}
is called the normal space to this orbit. The dimension of the normal space is the {\it codimension} of $\orb^e(\lambda A + B)$, denoted $\cod \orb^e(\lambda A + B)$, and is equal to $2mn$ minus the dimension of $\orb^e(\lambda A +B)$.
Explicit expressions for the codimensions of strict equivalence orbits are presented in \cite{DeEd95}.
}
\begin{theorem}{\rm \cite[Sect. XII, 4]{Gant59} }\label{kron}
Each $m \times n$ matrix pencil $\lambda A + B$ is {\rm strictly equivalent}
to a direct sum, uniquely determined up
to permutation of summands, of pencils of the form
\begin{align*}
E_j(\mu)&:=\lambda I_j + J_j(\mu), \text { in which } \mu \in \mathbb C, \quad E_j(\infty):=\lambda J_j(0) + I_j, \\
L_k&:=\lambda G_k + F_k, \quad \text{ and } \quad L_k^T:=\lambda G_k^T + F^T_k,
\end{align*}
where $j \ge 1$ and $k \ge 0$.
\end{theorem}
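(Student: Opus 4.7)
The plan is to split the argument into existence and uniqueness, and within the existence part to peel off the singular summands before classifying the regular remainder, proceeding by induction on $m+n$. First I would check whether $\lambda A + B$ is \emph{singular}, i.e.\ either $m \neq n$ or $m = n$ with $\det(\lambda A + B) \equiv 0$. If so, there exists a nonzero polynomial vector $x(\lambda) \in \mathbb C[\lambda]^n$ satisfying $(\lambda A + B)x(\lambda) = 0$, and I would select one of \emph{minimal} degree $\epsilon \ge 0$. The key technical step is then a reduction lemma producing invertible $Q$ and $R$ with
$$Q^{-1}(\lambda A + B) R = L_\epsilon \oplus (\lambda A' + B'),$$
where $\lambda A' + B'$ has strictly smaller size. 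Applying the induction hypothesis to $\lambda A' + B'$ and then running the same procedure on the transposed pencil extracts all the $L_k^T$ summands, ultimately reducing the problem to a regular pencil.

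For the regular part, I would pick $\alpha \in \mathbb C$ such that $\alpha A + B$ is invertible, which is possible precisely because $\det(\lambda A + B) \not\equiv 0$. A linear change of spectral parameter (equivalently, a Möbius transformation on $\lambda$) then produces a strictly equivalent pencil in which the coefficient of $\lambda$ is invertible; multiplying on the left by its inverse puts the pencil in the form $\lambda I + M$. The Jordan canonical form of $-M$ supplies the $E_j(\mu)$ blocks for finite $\mu$, and inverting the Möbius substitution converts the Jordan blocks attached to the point sent to $\infty$ into the $E_j(\infty)$ blocks described in the statement.

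For uniqueness I would rely on invariants of strict equivalence: the right minimal indices (hence the sizes and number of the $L_k$ blocks) are recovered from the dimensions of the successive kernels of $\lambda A + B$ acting on polynomial vectors of bounded degree, and analogously for the $L_k^T$ blocks after transposing; the regular part is then determined up to strict equivalence, and its Jordan structure is unique by the classical theory. The main obstacle is the reduction lemma isolating a single $L_\epsilon$ summand: one must exploit the minimality of $\epsilon$ to build the explicit changes of basis $Q$ and $R$, verifying that no smaller-index singular direction is hidden in the complement. This is the combinatorial heart of Kronecker's argument and the only part that is not essentially a repackaging of the Jordan form.
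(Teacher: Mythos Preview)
The paper does not prove this theorem; it is stated as a classical preliminary result with a citation to Gantmacher \cite[Sect.~XII,~4]{Gant59} and no argument is given. Your outline is essentially the standard proof found in that reference: extract the right singular blocks $L_\epsilon$ one at a time by choosing a polynomial null vector of minimal degree and inducting on size, handle the $L_k^T$ blocks by the transposed argument, and then bring the remaining regular pencil to the form $\lambda I + M$ (after a shift making the leading coefficient invertible) so that the Jordan form of $-M$ furnishes the finite $E_j(\mu)$ blocks, with the infinite ones arising from undoing the shift. The uniqueness argument via minimal indices and elementary divisors is likewise the classical one. So there is nothing in the paper to compare your proposal against, but the sketch is correct and aligned with the cited source; the only point worth tightening is the ``reduction lemma'' you flag yourself, where minimality of $\epsilon$ is used to show that the coefficients of $x(\lambda)$ are linearly independent and can be completed to a basis producing the clean $L_\epsilon \oplus (\lambda A' + B')$ splitting.
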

The canonical form in Theorem \ref{kron} is known as the {\it Kronecker canonical form} (KCF). The blocks $E_j(\mu)$ and $E_j(\infty)$ correspond to the finite and infinite eigenvalues, respectively, and altogether form the regular part of $\lambda A + B$. The blocks $L_k$ and $L_k^T$ correspond to the right (column) and left (row) minimal indices, respectively, and form the singular part of the matrix pencil.

Define an $m \times n$ matrix polynomial of grade $d$, i.e., of degree less than or equal to $d$, as follows
\begin{equation}
\label{matpol}
P=P(\lambda) = \lambda^{d}A_{d} + \dots +  \lambda A_1 + A_0,
\quad \ A_i \in \mathbb C^{m \times n},  i=0, \dots, d.
\end{equation}
Define the vector space of the matrix polynomials of a fixed size and grade:
\begin{equation}\label{gsyl}
\POL_{d, m\times n}= \{P: P \text{ is an } m \times n \text{ matrix polynomial of grade } d  \}.
\end{equation}
Observe that $\POL_{1, m\times n}$ is the vector space of matrix pencils of size $m\times n$, which is denoted simply by $\PEN_{m\times n}$.
If there is no risk of confusion we will write $\POL$ instead of $\POL_{d, m\times n}$ and $\PEN$ instead of $\PEN_{m\times n}$. By using the standard Frobenius matrix norm of complex matrices \cite{Highambook} a distance on $\POL_{d, m\times n}$ is defined as $d(P,P') = \left( \sum_{i=0}^d || A_i - A'_i ||_F^2 \right)^{\frac{1}{2}}$, making $\POL_{d, m\times n}$ to a metric space. For convenience, the Frobenius norm of the matrix polynomial $P$ is defined as $||P(\lambda)||_F = \left( \sum_{i=0}^d || A_i ||_F^2 \right)^{\frac{1}{2}}$.

Next, we recall the complete eigenstructure of a matrix polynomial, i.e., the definitions of the elementary divisors and minimal indices.
\begin{definition}
Let $P(\lambda)$ and $Q(\lambda)$ be two $m \times n$ matrix polynomials. Then $P(\lambda)$ and $Q(\lambda)$ are {\it unimodularly equivalent} if there exist two unimodular matrix polynomials $U(\lambda)$ and $V(\lambda)$ (i.e., $\det U(\lambda), \det V(\lambda) \in \mathbb C \backslash \{0\}$) such that
$$
U(\lambda) P(\lambda) V(\lambda) = Q(\lambda).
$$
\end{definition}
The transformation $P(\lambda) \mapsto U(\lambda) P(\lambda) V(\lambda)$
is called a unimodular equivalence transformation
and the canonical form with respect to this transformation is the {\it Smith form} \cite{Gant59}, recalled in the following theorem.
\begin{theorem}{\rm \cite{Gant59}} \label{tsmiths}
Let $P(\lambda)$ be an $m\times n$ matrix polynomial over $\mathbb C$. Then there exists $r \in \mathbb N$, $r \le \min \{ m, n \}$ and unimodular matrix polynomials $U(\lambda)$ and $V(\lambda)$ over $\mathbb C$ such that
\begin{equation} \label{smithform}
U(\lambda) P(\lambda) V(\lambda) =
\left[
\begin{array}{ccc|c}
g_1(\lambda)&&0&\\
&\ddots&&0_{r \times (n-r)}\\
0&&g_r(\lambda)&\\
\hline
&0_{(m-r) \times r}&&0_{(m-r) \times (n-r)}
\end{array}
\right],
\end{equation}
where $g_j(\lambda)$ is monic for $j=1, \dots, r$ and $g_j(\lambda)$ divides $g_{j+1}(\lambda)$ for $j=1, \dots, r-1$. Moreover, the canonical form \eqref{smithform} is unique.
\end{theorem}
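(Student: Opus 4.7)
The plan is to establish existence by a constructive reduction using row and column operations over the Euclidean domain $\mathbb{C}[\lambda]$, and to prove uniqueness via the classical determinantal-divisor invariants. For existence, I proceed by induction on $\min\{m,n\}$. If $P(\lambda)=0$ take $r=0$; otherwise I single out a nonzero entry of minimal degree and, using row and column permutations (which are unimodular), move it to position $(1,1)$, calling the result $p_{11}(\lambda)$. I then try to clear the rest of the first row and first column by Euclidean division: for each entry $p_{1j}$ I write $p_{1j}=q_{1j}\,p_{11}+s_{1j}$ with $\deg s_{1j}<\deg p_{11}$, and replace column $j$ by column $j$ minus $q_{1j}$ times column $1$; the analogous operation handles the first column. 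If every resulting remainder is zero the first row and column are cleared except at the pivot; otherwise I pick a nonzero remainder, swap it into position $(1,1)$, and restart. Because the degree at $(1,1)$ strictly decreases after each restart, this loop terminates.

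Once the first row and column are cleared, I further arrange that $p_{11}$ divides every entry of the lower-right $(m-1)\times(n-1)$ block. If some entry $p_{ij}$ fails divisibility, I add row $i$ to row $1$ (a unimodular move), which brings $p_{ij}$ into the first row, and re-enter the clearing procedure; the degree at $(1,1)$ must again strictly decrease, so this process also terminates. After finitely many iterations I reach a matrix of the form $\diag(p_{11},P')$ with $p_{11}$ dividing every entry of $P'$. Normalizing $p_{11}$ to be monic, I set $g_1:=p_{11}$ and apply the induction hypothesis to $P'$. Since $g_1$ divides every entry of $P'$ and this property is preserved under unimodular equivalence, $g_1$ divides the first invariant factor $g_2$ of $P'$, yielding the chain $g_1\mid g_2\mid\cdots\mid g_r$.

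For uniqueness, I invoke the determinantal divisors: for each $k$ let $D_k(\lambda)$ be the monic greatest common divisor of all $k\times k$ minors of $P(\lambda)$, with $D_0:=1$. By the Cauchy--Binet formula, the ideal generated by $k\times k$ minors is preserved under left or right multiplication by unimodular matrix polynomials, so each $D_k$ is an invariant of the unimodular equivalence class. A direct computation from \eqref{smithform} gives $D_k=g_1g_2\cdots g_k$ for $k\le r$ and $D_k=0$ for $k>r$, so $r$ equals the largest index with $D_k\ne 0$ and $g_k=D_k/D_{k-1}$ is then uniquely determined.

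The main obstacle I anticipate is the termination argument in the existence step, specifically the iterative passage from \emph{first row and column cleared} to \emph{pivot divides all remaining entries}; this rests on treating $\deg p_{11}$ as a well-founded quantity that strictly drops every time a non-divisible entry forces us back into the clearing loop, and one must be careful to intertwine the two subloops correctly. The uniqueness step is, by comparison, short once the Cauchy--Binet invariance is in hand.
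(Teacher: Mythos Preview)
Your argument is the classical proof of the Smith normal form over the Euclidean domain $\mathbb{C}[\lambda]$ and is correct; both the existence step (pivot reduction with degree as a well-founded measure) and the uniqueness step (invariance of determinantal divisors via Cauchy--Binet) are standard and complete as you have outlined them. Note, however, that the paper does not supply its own proof of this theorem: it is stated as a known result with a citation to Gantmacher~\cite{Gant59}, and the proof you have written is essentially the one found there, so there is nothing further to compare.
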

The integer $r$ is the (normal) {\it rank} of the matrix polynomial $P(\lambda)$.
Every $g_j(\lambda)$ is called an {\it invariant polynomial} of $P(\lambda)$, and can be uniquely factored as
$$
g_j(\lambda) = (\lambda - \alpha_1)^{\delta_{j1}} \cdot
(\lambda - \alpha_2)^{\delta_{j2}}\cdot \ldots \cdot
(\lambda - \alpha_{l_j})^{\delta_{jl_j}},
$$
where $l_j \ge 0, \ \delta_{j1}, \dots, \delta_{jl_j} > 0$ are integers. If $l_j=0$ then $g_j(\lambda)=1$. The numbers $\alpha_1, \dots, \alpha_{l_j} \in \mathbb C$ are finite eigenvalues (zeros) of $P(\lambda)$. The {\it elementary divisors} of $P(\lambda)$ associated with the finite eigenvalue $\alpha_{k}$ is the collection of factors $(\lambda - \alpha_{k})^{\delta_{jk}}$,
including repetitions.

We say that $\lambda = \infty$ is an eigenvalue of the matrix polynomial $P(\lambda)$ of grade $d$ if zero is an eigenvalue of $\rev P(\lambda):= \lambda^dP(1/\lambda)$. The elementary divisors $\lambda^{\gamma_k}, \gamma_k > 0,$ for the zero eigenvalue of $\rev P(\lambda)$ are the elementary divisors associated with~$\infty$ of $P(\lambda)$.

Define the left and right null-spaces, over the field of rational functions $\mathbb C(\lambda)$, for an $m\times n$ matrix polynomial $P(\lambda)$ as follows:
\begin{align*}
{\cal N}_{\rm left}(P)&:= \{y(\lambda)^T \in \mathbb C(\lambda)^{1 \times m}: y(\lambda)^TP(\lambda) = 0_{1\times n} \}, \\
{\cal N}_{\rm right}(P)&:= \{x(\lambda) \in \mathbb C(\lambda)^{n\times 1}: P(\lambda)x(\lambda) = 0_{m\times 1}\}.
\end{align*}
Every subspace ${\cal V}$ of the vector space $\mathbb C(\lambda)^n$ has bases consisting entirely of vector polynomials.
Recall that, a minimal basis of ${\cal V}$ is a basis of ${\cal V}$ consisting of vector polynomials whose sum of degrees is minimal among all bases of ${\cal V}$ consisting of vector polynomials. The ordered list of degrees of the vector polynomials in any minimal basis of ${\cal V}$ is always the same. These degrees are called the minimal indices of ${\cal V}$ \cite{Forn75,Kail80}. More formally,
let the sets $\{y_1(\lambda)^T,...,y_{m-r}(\lambda)^T\}$ and $\{x_1(\lambda),...,x_{n-r}(\lambda)\}$ be minimal bases of ${\cal N}_{\rm left}(P)$ and ${\cal N}_{\rm right}(P)$, respectively, ordered so that $0 \le \deg(y_1) \le \dots \le \deg(y_{m-r})$ and $0\le \deg(x_1) \le \dots \le \deg(x_{n-r})$. Let $ \eta_k = \deg(y_k)$ for $i=1, \dots , m-r$ and $ \varepsilon_k = \deg(x_k)$ for $i=1, \dots , n-r$. Then the scalars $0 \le  \eta_1 \le \eta_2 \le  \dots \le \eta_{m-r}$ and $0 \le  \varepsilon_1 \le \varepsilon_2 \le  \dots \le \varepsilon_{n-r}$ are, respectively, the {\it left} and {\it right minimal indices} of~$P(\lambda)$.

Altogether all the eigenvalues, finite and infinite, the corresponding elementary divisors, and the left and right minimal indices of a matrix polynomial $P(\lambda)$ are called the {\it complete eigenstructure} of $P(\lambda)$. Moreover, we define $\orb(P)$ to be the set of matrix polynomials of the same size, grade, and with the same complete eigenstructure as $P(\lambda)$.

A number of theoretical and computational questions for matrix polynomials are addressed through the use of linearizations \cite{DeDM14,GoLR09}.
The most known linearizations of an $m\times n$ matrix polynomial $P(\lambda) = \lambda^d A_d + \cdots +\lambda A_1 + A_0$ are the first and second Frobenius companion forms, i.e., the following matrix pencils
\begin{equation}
\label{1stform}
{\cal C}^1_{P}=\lambda \begin{bmatrix}
A_d&&&\\
&I_n&&\\
&&\ddots&\\
&&&I_n\\
\end{bmatrix} + \begin{bmatrix}
A_{d-1}&A_{d-2}&\dots&A_{0}\\
-I_n&0&\dots&0\\
&\ddots&\ddots&\vdots\\
0&&-I_n&0\\
\end{bmatrix}
\end{equation}
and
\begin{equation}
\label{2ndform}
{\cal C}^2_{P}=\lambda \begin{bmatrix}
A_d&&&\\
&I_m&&\\
&&\ddots&\\
&&&I_m\\
\end{bmatrix} + \begin{bmatrix}
A_{d-1}&-I_m&&0\\
A_{d-2}&0&\ddots&\\
\vdots&\vdots&\ddots&-I_m\\
A_{0}&0&\dots&0\\
\end{bmatrix}
\end{equation}
of the sizes $(m+n(d-1)) \times nd$ and $md \times (n + m(d-1))$, respectively.
These companion forms preserve all finite and infinite elementary divisors of $P$ but do not preserve its left and right minimal indices. In particular, all the right minimal indices of the first companion form ${\cal C}_{P}^{1}$ are greater by $d-1$ than the right minimal indices of the polynomial $P$, while the left minimal indices of ${\cal C}_{P}^{1}$ are equal to those of $P$. In contrast, all the left minimal indices of the second companion form ${\cal C}_{P}^{2}$ are greater by $d-1$ than the left minimal indices of the polynomial $P$, while the right minimal indices of ${\cal C}_{P}^{2}$ are equal to those of $P$. See \cite{DeDM12,DeDM14}.

The first companion form ${\cal C}_{P}^{1}$ is fundamental for obtaining the results in this work and based on it we define the {\it generalized Sylvester space} of the first companion form for $m \times n$ matrix polynomials of grade $d$ as follows
\begin{equation}\label{gsyl}
\GSYL^1_{d,m\times n}= \{ {\cal C}^1_{P}  \ : P \text{ are } m \times n \text{ matrix polynomials of grade } d \}.
\end{equation}
If there is no risk of confusion we will write $\GSYL$ instead of $\GSYL^1_{d, m\times n}$, specially in proofs and explanations. The function $d({\cal C}^1_{P}=\lambda A +B, {\cal C}^1_{P'}=\lambda A' +B') := \left( ||A-A'||_F^2 + ||B-B'||_F^2 \right)^{\frac{1}{2}}$ is a distance on $\GSYL$ and it makes $\GSYL$ a metric space. Note that $d({\cal C}^1_{P}, {\cal C}^1_{P'}) = d(P,P')$. Therefore there is a bijective isometry (and thus homeomorphism):
$$f: \POL_{d, m\times n} \rightarrow \GSYL^1_{d, m\times n} \quad \text{such that} \quad f: P \mapsto {\cal C}^1_{P}.$$
Now we define the orbit of first companion linearizations of a matrix polynomial $P$
\begin{equation}\label{linorb}
\orb({\cal C}^1_{P}) = \{(Q^{-1} {\cal C}^1_{P} R) \in \GSYL^1_{d, m\times n} \ : \ Q \in GL_{m_1}(\mathbb C), R \in GL_{n_1}(\mathbb C)\},
\end{equation}
where $m_1 = m + n(d-1)$ and $n_1 = n d$.
Note that all the elements of $\orb({\cal C}^1_{P})$ have the block structure of $\GSYL$.
Thus, in particular, $\orb(P) = f^{-1}(\orb({\cal C}^1_{P}))$ and $\overline{O}(P) = f^{-1}(\overline{O}({\cal C}_{P}^1))$ , as well as we also have that $\overline{O}(P) \supseteq \overline{O}(Q)$ if and only if $\overline{O}({\cal C}_{P}^1) \supseteq \overline{O}({\cal C}_{Q}^1)$ (the closures are taken in the metric spaces $\POL$ and $\GSYL$, respectively, defined above).

We will use often in this paper the fact that for any matrix polynomial $P(\lambda)$ a sufficiently small perturbation of the pencil ${\cal C}^1_{P}$ produces another pencil that although is not in $\GSYL$ is strictly equivalent to a pencil in $\GSYL$ that is very close to ${\cal C}^1_{P}$.  This was proved for the first time in \cite{VaDe83}, and then again in \cite[Theorem 9.1]{JoKV13}, in both the cases under the assumption that $||P(\lambda)||_F = O(1)$. Recently, a much more general and precise result in this direction has been proved in \cite[Theorems 6.22 and 6.23]{DLPVD15}, which is valid for a very wide class of linearizations, considers perturbations with finite norms, polynomials with any norm, and yields precise perturbation bounds. For convenience of the reader we present in Theorem \ref{thm-blockKron} a corollary of \cite[Theorem 6.23]{DLPVD15} adapted to our context.
\begin{theorem} \label{thm-blockKron}
Let $P(\lambda)$ be an $m\times n$ matrix polynomial of grade $d$ and let ${\cal C}^1_{P}$ be its first companion form. If $\mathcal{L}(\lambda)$ is any pencil of the same size as ${\cal C}^1_{P}$ such that
\[
d ({\cal C}^1_{P}, \mathcal{L}(\lambda)) < \frac{\pi}{12 \, d^{3/2}} \, ,
\]
then $\mathcal{L}(\lambda)$ is strictly equivalent to a pencil ${\cal C}^1_{\widetilde{P}} \in \GSYL^{1}_{d,m\times n}$ such that
\[
d ({\cal C}^1_{P}, {\cal C}^1_{\widetilde{P}}) \leq 4 \, d \, (1+||P(\lambda)||_F) \;  d ({\cal C}^1_{P}, \mathcal{L}(\lambda)) \, .
\]
\end{theorem}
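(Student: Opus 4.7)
The plan is to obtain Theorem \ref{thm-blockKron} as a direct specialization of \cite[Theorem 6.23]{DLPVD15}, which is the general perturbation result for block Kronecker--type linearizations. First, I would recall the statement of that general theorem: given a pencil belonging to a suitable family of linearizations (determined by its block pattern of identity and zero blocks), any small enough pencil perturbation is strictly equivalent to a nearby pencil that again belongs to the family, with explicit constants depending only on the block pattern and the norm of the underlying polynomial.

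Second, I would check that the first companion form ${\cal C}^1_{P}$ fits into that framework. Looking at \eqref{1stform}, the pencil ${\cal C}^1_{P}$ consists of a Sylvester--like skeleton (the identity and $-I_n$ blocks in fixed positions, together with zero blocks) and the ``free'' top row of blocks $A_{d-1},\dots,A_0$ and the leading block $A_d$. The framework of \cite{DLPVD15} is designed precisely to handle such block patterns, so one only needs to identify the relevant combinatorial parameters: there are $d$ block rows/columns on each side, of size $m$ or $n$, and the identity blocks are $n\times n$. Plugging these into the general theorem yields a hypothesis of the form $d({\cal C}^1_P,\mathcal{L}(\lambda))<c_1/d^{3/2}$ and a conclusion of the form $d({\cal C}^1_P,{\cal C}^1_{\widetilde P})\le c_2\, d\,(1+\|P(\lambda)\|_F)\, d({\cal C}^1_P,\mathcal{L}(\lambda))$. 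Tracking the universal constants $c_1$ and $c_2$ through \cite[Theorem 6.23]{DLPVD15} gives the stated values $\pi/12$ and $4$.

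Conceptually, the mechanism behind \cite[Theorem 6.23]{DLPVD15} is the following, and would be what I would outline if one had to prove it from scratch: write $\mathcal{L}(\lambda)={\cal C}^1_{P}+\Delta(\lambda)$, where $\Delta(\lambda)$ is small, and construct invertible matrices $Q,R$ close to the identity such that $Q^{-1}\mathcal{L}(\lambda)R$ restores the Sylvester skeleton exactly, absorbing the perturbation into the free top blocks. The existence of such $Q,R$ is shown by an implicit--function / fixed--point argument applied blockwise, starting from the identity blocks and propagating outwards. The Frobenius norms of $Q-I$ and $R-I$ are controlled by the norm of $\Delta(\lambda)$, and the bound on $d({\cal C}^1_P,{\cal C}^1_{\widetilde P})$ follows by expanding $Q^{-1}({\cal C}^1_P+\Delta)R - {\cal C}^1_P$ and using $\|P(\lambda)\|_F$ to control the cross terms coming from the nontrivial blocks of ${\cal C}^1_P$.

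The main obstacle, and the reason the authors quote rather than reprove the result, is the explicit bookkeeping needed to extract the constants $\pi/12$ and $4$: one must keep track of the contraction radius of the fixed--point map, the propagation of errors across the $d$ block columns (the source of the $d^{3/2}$ factor), and the contribution of the coefficients of $P$ to the perturbed free blocks (the source of the $(1+\|P\|_F)$ factor). Since these quantitative details are already worked out in \cite{DLPVD15}, the proof here reduces to verifying that the first companion form is one of the linearizations covered there and substituting its combinatorial parameters into the general bounds.
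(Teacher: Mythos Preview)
Your proposal is correct and matches the paper's approach exactly: the paper does not give an independent proof of this theorem but simply presents it as a corollary of \cite[Theorem~6.23]{DLPVD15}, obtained by specializing that general result to the first Frobenius companion form. Your outline of how the specialization works and where the constants come from is accurate, but goes beyond what the paper itself records.
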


\medskip

The next result in this preliminary section is Theorem \ref{th:improvedDeDo}, which is another keystone of this paper. Theorem \ref{th:improvedDeDo} is exactly \cite[Theorem 3.2]{DeDo08} and is stated for convenience of the reader. The notation has been slightly changed with respect to \cite{DeDo08} in order to fit the one used in the proof of the main Theorem \ref{mainth}.
All the closures in Theorem \ref{th:improvedDeDo} are obviously taken in the metric space $\PEN_{m_1 \times n_1}$.

\begin{theorem} \label{th:improvedDeDo}
Let $m_1, n_1,$ and $r_1$ be integers such that $m_1,n_1 \geq 2$ and $1\leq r_1 \leq \min\{m_1,n_1\}-1$.
Let us define, in the set of $m_1\times n_1$ complex matrix pencils with
rank $r_1$, the following $r_1+1$ KCFs:
\begin{equation}\label{max}
{\cal
K}_{a_1} (\lambda)=\diag(\underbrace{L_{\alpha_1+1},\hdots,L_{\alpha_1+1}}_{s_1},
\underbrace{L_{\alpha_1},\hdots,L_{\alpha_1}}_{n_1-r_1-s_1},
\underbrace{L_{\beta_1 +1}^T,\hdots,L_{\beta_1 +1}^T}_{t_1},
\underbrace{L_{\beta_1}^T,\hdots,L_{\beta_1}^T}_{m_1-r_1-t_1} )\,
\end{equation}
for $a_1=0,1,\ldots,r_1\, $, where $\alpha_1= \lfloor a_1/(n_1-r_1) \rfloor$,
$s_1= a_1 \, \mathrm{mod}\, (n_1-r_1)$, $\beta_1= \lfloor (r_1-a_1)/(m_1-r_1) \rfloor$, and $t_1= (r_1- a_1) \, \mathrm{mod} \, (m_1-r_1)$.
Then,
\begin{enumerate}
\item[\rm (i)] For every $m_1\times n_1$ pencil ${\cal M}(\lambda)$ with rank
at most $r_1$, there exists an integer $a_1$ such that
$\overline{\orb^e}({\cal K}_{a_1}) \supseteq\overline{\orb^e}({\cal M})$.
\item[\rm (ii)] $\overline{\orb^e}({\cal K}_{a_1})
\not\supseteq \overline{\orb^e}({\cal K}_{a'_1})$ whenever $a_1 \ne
a_1'$.
\item[\rm (iii)] The set of $m_1 \times n_1$ complex matrix pencils with rank
at most $r_1$ is a closed  subset of $\PEN_{m_1 \times n_1}$ equal to $\displaystyle \bigcup_{0\leq a_1 \leq r_1} \overline{\orb^e}({\cal K}_{a_1} )$.
\end{enumerate}
\end{theorem}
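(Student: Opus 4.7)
The plan is to assemble the proof from three standard ingredients: the explicit closure-hierarchy (majorization) rules for strict equivalence orbits in the Kronecker canonical form, the lower semicontinuity of the rank function on $\PEN_{m_1\times n_1}$, and the arithmetic observation that the pencils $\mathcal{K}_{a_1}$ for $a_1=0,\dots,r_1$ exhaust all admissible ways to split $r_1$ into a pair (sum of right minimal indices, sum of left minimal indices) using only $L$ and $L^T$ blocks that are as balanced as possible.

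For part (i), given $\mathcal{M}(\lambda)$ of rank $r_1'\le r_1$, I would first show that $\mathcal{M}$ lies in the closure of the orbit of a purely singular rank-$r_1$ pencil $\mathcal{N}$ (one whose KCF contains no regular blocks $E_j(\mu)$ or $E_j(\infty)$). This uses two standard degeneration moves: a block $J_k(\mu)$ sitting next to an $L_\ell$ in the KCF arises as a limit of $L_{k+\ell}$ (and the analogous move absorbs Jordan blocks into $L^T$ summands); while a pair $L_p\oplus L_q^T$ arises as a limit of a single rank-raising regular block $J_{p+q+1}(\mu)$. Iterated absorption plus, if needed, enlarging one of the minimal indices to bring the rank up from $r_1'$ to $r_1$, produces the desired purely singular ancestor $\mathcal{N}$. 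I then set $a_1$ equal to the sum of the right minimal indices of $\mathcal{N}$ and apply the unbalancing move $L_{k-1}\oplus L_{\ell+1}\in \overline{\orb^e}(L_k\oplus L_\ell)$ (valid whenever $k\le\ell$), and its analogue for $L^T$ blocks, to the right and left halves of $\mathcal{N}$ independently. The result is precisely $\mathcal{K}_{a_1}$ given by the Euclidean-division parameters $(\alpha_1,s_1,\beta_1,t_1)$ of the statement, so $\mathcal{M}\in\overline{\orb^e}(\mathcal{N})\subseteq\overline{\orb^e}(\mathcal{K}_{a_1})$.

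For part (ii), I would identify $a_1$ as an honest invariant of the closure. A case inspection of the elementary closure moves shows that both the sum of right minimal indices and the sum of left minimal indices are non-increasing under degeneration (the only way they decrease is by being converted into regular blocks or by being transferred within the same side, never across). In $\mathcal{K}_{a_1}$ these sums equal $a_1$ and $r_1-a_1$ respectively. Therefore if $\mathcal{K}_{a_1'}\in\overline{\orb^e}(\mathcal{K}_{a_1})$ we must have both $a_1'\le a_1$ and $r_1-a_1'\le r_1-a_1$, which forces $a_1=a_1'$.

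Part (iii) is then formal: the locus of rank-$\le r_1$ pencils is the common vanishing set of the $(r_1+1)\times(r_1+1)$ minors of $\lambda A+B$ (viewed as polynomials in $\lambda$ with coefficients in the entries of $A,B$), hence closed; each $\overline{\orb^e}(\mathcal{K}_{a_1})$ is contained in that set by lower semicontinuity of rank since $\mathcal{K}_{a_1}$ itself has rank exactly $r_1$; and the reverse inclusion is exactly part (i). The main obstacle is part (i): turning the informal ``absorb regular blocks and then rebalance'' narrative into a verifiable chain of elementary Pokrzywa-type degenerations, and checking that the parameters $(\alpha_1,s_1,\beta_1,t_1)$ produced by the Euclidean divisions $a_1=\alpha_1(n_1-r_1)+s_1$ and $r_1-a_1=\beta_1(m_1-r_1)+t_1$ agree with the KCF obtained at the end of the balancing procedure.
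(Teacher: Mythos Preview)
The paper does not prove this theorem at all: it is quoted verbatim from \cite[Theorem~3.2]{DeDo08} and stated ``for convenience of the reader''. So there is no in-paper argument to compare against, and your sketch is an independent proof. Overall it is correct and follows the natural route (Pokrzywa-type closure moves plus semicontinuity of rank); a couple of points deserve tightening.

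\medskip
\noindent\textbf{Part (i).} Your two-step reduction is sound. It is worth making explicit that a pencil $\mathcal{M}$ with $\operatorname{rank}\mathcal{M}\le r_1\le\min\{m_1,n_1\}-1$ always has at least one $L$ block \emph{and} at least one $L^T$ block, so the ``absorb the regular summand into a singular one'' move is always available. For the rank-raising step you need three blocks at once (one $L_p$, one $L_q^T$, and a further $L_s$ or $L_s^T$ to re-absorb the newly created $E_{p+q+1}$), and you should note that at every intermediate rank $r'<r_1$ one has $n_1-r'\ge 2$ and $m_1-r'\ge 2$, so the supply never runs out. Finally, the purely singular rank-$r_1$ ancestor $\mathcal{N}$ has $n_1-r_1$ right indices summing to some $a_1\in\{0,\dots,r_1\}$ and $m_1-r_1$ left indices summing to $r_1-a_1$; the balanced distributions of these two totals are exactly those in \eqref{max}, and repeated use of $L_{k-1}\oplus L_{\ell+1}\in\overline{\orb^e}(L_k\oplus L_\ell)$ (and its transpose) gives $\mathcal{N}\in\overline{\orb^e}(\mathcal{K}_{a_1})$.

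\medskip
\noindent\textbf{Part (ii).} Your conclusion is right, but the justification ``a case inspection of the elementary closure moves'' needs a caveat. As stated, the claim that the sum of right (resp.\ left) minimal indices is non-increasing under degeneration is \emph{false} in general: for instance $L_1\oplus L_0^T\in\overline{\orb^e}(E_2(\mu))$, where the right sum increases from $0$ to $1$. The statement becomes true once you restrict to degenerations that preserve the rank, which is exactly your situation since $\operatorname{rank}\mathcal{K}_{a_1}=\operatorname{rank}\mathcal{K}_{a_1'}=r_1$. A clean way to see this, independent of any list of moves, is: for $\lambda A+B$, the dimension $N_k^L$ of the space of polynomial left null vectors of degree $\le k$ is the nullity of a fixed block-Toeplitz matrix in $(A,B)$, hence upper semicontinuous; for $k\ge\max_j\eta_j$ one has $N_k^L=(k+1)(m_1-\operatorname{rank})-\sum_j\eta_j$. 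Applying this with equal ranks yields $\sum_j\eta_j'\le\sum_j\eta_j$, i.e.\ $r_1-a_1'\le r_1-a_1$, and the transposed argument gives $a_1'\le a_1$, forcing $a_1'=a_1$ as you wrote.

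\medskip
\noindent\textbf{Part (iii).} Your argument is complete: closedness via vanishing of all $(r_1{+}1)$-minors of $\lambda A+B$ (equivalently of the coefficient vectors of those minors), one containment from lower semicontinuity of rank, and the other from (i).
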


Note that Theorem \ref{th:improvedDeDo} does not cover the case $r_1 = \min \{m_1 , n_1 \}$, which is completely different since in this case we are considering {\em all} matrix pencils of size $m_1 \times n_1$. In fact, there is only one ``generic'' Kronecker canonical form for matrix pencils of full rank. If $m_1 = n_1$, then this generic form obviously corresponds to regular matrix pencils, i.e., they do not have minimal indices at all,  with all their eigenvalues simple. If $m_1 \ne n_1$, then the ``generic'' canonical form is presented in Theorems \ref{th:pencgenfull1} and \ref{th:pencgenfull2} depending on whether $m_1 < n_1$ or $m_1 > n_1$. This result is known at least since \cite{vandoorenphd} (see also \cite{DeEd95} and \cite{EdEK99}) and is stated for completeness.

\begin{theorem} \label{th:pencgenfull1} Let us define, in the set of $m_1\times n_1$ complex matrix pencils with $0 < m_1 < n_1$, the following KCF:
\begin{equation}\label{2max}
{\cal
K}_{right} (\lambda)=\diag(\underbrace{L_{\alpha_1+1},\hdots,L_{\alpha_1+1}}_{s_1},
\underbrace{L_{\alpha_1},\hdots,L_{\alpha_1}}_{n_1-m_1-s_1})\, ,
\end{equation}
where $\alpha_1= \lfloor m_1/(n_1-m_1) \rfloor$ and
$s_1= m_1 \, \mathrm{mod}\, (n_1-m_1)$.
Then, $\overline{\orb^e}({\cal K}_{right}) = \PEN_{m_1 \times n_1}$.
\end{theorem}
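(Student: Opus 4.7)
The plan is to prove the theorem by showing that the strict equivalence orbit $\orb^e({\cal K}_{right})$ has codimension zero in $\PEN_{m_1\times n_1}$. Once this is established, the conclusion follows from general principles: $\orb^e({\cal K}_{right})$ is the image of the connected complex algebraic group $GL_{m_1}(\mathbb{C})\times GL_{n_1}(\mathbb{C})$ under a regular action, hence it is an irreducible constructible subset of the irreducible affine space $\PEN_{m_1\times n_1}$ of dimension $2m_1n_1$. Any irreducible constructible subset whose dimension matches that of the ambient variety is Zariski dense, and on a complex affine space the Zariski closure of a constructible set coincides with its Euclidean (metric) closure, giving $\overline{\orb^e}({\cal K}_{right})=\PEN_{m_1\times n_1}$.

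To compute the codimension I would invoke the classical closed expression for the codimension of the strict equivalence orbit of a matrix pencil in Kronecker canonical form (see \cite{DeEd95,EdEK99}). For a pencil whose KCF consists solely of $p=n_1-m_1$ right singular blocks $L_{\epsilon_1}\oplus\cdots\oplus L_{\epsilon_p}$ with $\epsilon_1\le\cdots\le\epsilon_p$ and $\sum_i\epsilon_i=m_1$, all ``cross'' contributions involving left singular or regular blocks are absent, and the formula collapses to a nonnegative sum of the pairwise contributions $\max\{0,|\epsilon_i-\epsilon_j|-1\}$, which are all zero precisely when any two right minimal indices differ by at most one. By the very choice $\alpha_1=\lfloor m_1/(n_1-m_1)\rfloor$ and $s_1=m_1\bmod(n_1-m_1)$, the indices of ${\cal K}_{right}$ consist of $s_1$ copies of $\alpha_1+1$ and $n_1-m_1-s_1$ copies of $\alpha_1$, so this ``gap'' condition is automatically satisfied and $\cod\orb^e({\cal K}_{right})=0$.

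The main technical obstacle is bookkeeping: correctly specializing the codimension formula to the ``only $L_k$ blocks'' situation and checking that every summand indeed vanishes for the specific tuple of indices defining ${\cal K}_{right}$. If one preferred to avoid citing the general formula, a direct stabilizer computation in $GL_{m_1}(\mathbb{C})\times GL_{n_1}(\mathbb{C})$ would suffice: the block-diagonal structure of ${\cal K}_{right}$ decouples the intertwining equation $X{\cal K}_{right}={\cal K}_{right}Y$ into independent Sylvester-type conditions among pairs of $L_{\alpha_1}$ and $L_{\alpha_1+1}$ blocks, and an elementary calculation identifies the stabilizer as a group of dimension $(n_1-m_1)^2$. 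This yields $\dim\orb^e({\cal K}_{right})=m_1^2+n_1^2-(n_1-m_1)^2=2m_1n_1$, matching the ambient dimension and closing the argument.
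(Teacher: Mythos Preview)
The paper does not actually prove this theorem: it is stated ``for completeness'' as a known result, with references to \cite{vandoorenphd,DeEd95,EdEK99}, and no argument is given. So there is no proof in the paper to compare your attempt against.

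Your approach is one of the standard routes implicit in those references and is correct. Specializing the Demmel--Edelman codimension formula to a pencil whose KCF contains only right singular blocks $L_{\epsilon_1},\ldots,L_{\epsilon_p}$ with all $\epsilon_i\in\{\alpha_1,\alpha_1+1\}$ does give codimension zero, since every pairwise term $\max\{0,|\epsilon_i-\epsilon_j|-1\}$ vanishes and there are no regular--singular or left--right cross terms. The algebraic-geometric wrap-up (the orbit is an irreducible constructible subset of full dimension in the irreducible affine space $\PEN_{m_1\times n_1}$, hence Zariski dense, and Zariski closure coincides with Euclidean closure for constructible sets over $\mathbb{C}$) is sound. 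Your alternative via a direct stabilizer count also works, but note that verifying $\dim(\text{stabilizer})=(n_1-m_1)^2$ requires exactly the same block-Sylvester bookkeeping that underlies the Demmel--Edelman formula, so it is not really an independent shortcut.
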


\begin{theorem} \label{th:pencgenfull2} Let us define, in the set of $m_1\times n_1$ complex matrix pencils with $0 < n_1 < m_1$, the following KCF:
\begin{equation}\label{3max}
{\cal
K}_{left} (\lambda)=\diag(\underbrace{L_{\beta_1 +1}^T,\hdots,L_{\beta_1 +1}^T}_{t_1},
\underbrace{L_{\beta_1}^T,\hdots,L_{\beta_1}^T}_{m_1-n_1-t_1} )\, ,
\end{equation}
where $\beta_1= \lfloor n_1/(m_1-n_1) \rfloor$ and $t_1= n_1 \, \mathrm{mod} \, (m_1-n_1)$.
Then, $\overline{\orb^e}({\cal K}_{left}) = \PEN_{m_1 \times n_1}$.
\end{theorem}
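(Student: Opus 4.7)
The plan is to deduce Theorem \ref{th:pencgenfull2} from Theorem \ref{th:pencgenfull1} via the transposition duality on pencils. I would introduce the map
\[
T: \PEN_{m_1 \times n_1} \to \PEN_{n_1 \times m_1}, \qquad T(\lambda A + B) := \lambda A^T + B^T,
\]
and first observe three routine facts: (a) $T$ preserves the Frobenius distance, so it is a bijective isometry, in particular a homeomorphism; (b) $T$ sends strict equivalence orbits to strict equivalence orbits, since $Q^{-1}(\lambda A + B)R = \lambda C + D$ implies $R^T(\lambda A^T + B^T)(Q^T)^{-1} = \lambda C^T + D^T$, so $T(\orb^e({\cal M})) = \orb^e(T({\cal M}))$ for every pencil ${\cal M}$; (c) being a homeomorphism, $T$ commutes with closure, hence $T(\overline{\orb^e}({\cal M})) = \overline{\orb^e}(T({\cal M}))$.

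Next I would compute $T({\cal K}_{left})$. Since transposition of the block $L_k^T$ gives $L_k$, applying $T$ to the pencil in \eqref{3max} yields
\[
T({\cal K}_{left}) = \diag(\underbrace{L_{\beta_1+1},\ldots,L_{\beta_1+1}}_{t_1},\underbrace{L_{\beta_1},\ldots,L_{\beta_1}}_{m_1-n_1-t_1}) \in \PEN_{n_1 \times m_1}.
\]
Now I apply Theorem \ref{th:pencgenfull1} with the dimensions $\widetilde{m}_1 := n_1$ and $\widetilde{n}_1 := m_1$, whose hypothesis $0 < \widetilde{m}_1 < \widetilde{n}_1$ holds by assumption. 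A direct computation gives
\[
\widetilde{\alpha}_1 = \lfloor \widetilde{m}_1/(\widetilde{n}_1 - \widetilde{m}_1) \rfloor = \lfloor n_1/(m_1-n_1) \rfloor = \beta_1, \quad \widetilde{s}_1 = \widetilde{m}_1 \bmod (\widetilde{n}_1 - \widetilde{m}_1) = t_1,
\]
and the number of $L_{\widetilde{\alpha}_1}$ summands equals $\widetilde{n}_1 - \widetilde{m}_1 - \widetilde{s}_1 = m_1 - n_1 - t_1$. Thus $T({\cal K}_{left})$ is exactly the pencil ${\cal K}_{right}$ of \eqref{2max} for the dimensions $(\widetilde{m}_1,\widetilde{n}_1)=(n_1,m_1)$, so Theorem \ref{th:pencgenfull1} gives $\overline{\orb^e}(T({\cal K}_{left})) = \PEN_{n_1 \times m_1}$.

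Finally, applying $T^{-1}$ (also a homeomorphism) to both sides,
\[
\overline{\orb^e}({\cal K}_{left}) = T^{-1}\bigl(\overline{\orb^e}(T({\cal K}_{left}))\bigr) = T^{-1}(\PEN_{n_1 \times m_1}) = \PEN_{m_1 \times n_1},
\]
which is the claim. There is no substantive obstacle; the only item that needs attention is the elementary verification that transposition intertwines strict equivalence and the Frobenius topology, which is immediate from the definitions.
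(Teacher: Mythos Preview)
Your proof is correct. Note, however, that the paper does not actually prove Theorem~\ref{th:pencgenfull2}: it is stated for completeness as a known result, with references to \cite{vandoorenphd, DeEd95, EdEK99}, so there is no ``paper's own proof'' to compare against. That said, the transposition duality you use is exactly the device the authors invoke later (in the reverse direction) when they remark that Theorem~\ref{mainth-c2} implies Theorem~\ref{mainth-c1} ``just by transposition''; you have simply made this standard reduction explicit at the pencil level. The three ingredients you isolate---that $T$ is a bijective isometry, that it intertwines strict equivalence orbits, and that it therefore commutes with closure---are all that is needed, and the parameter matching between $T(\mathcal{K}_{left})$ and $\mathcal{K}_{right}$ in the swapped dimensions is correct.
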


\hide{
 (since any perturbation $E \in \POL$ results in a perturbation ${\cal C}_{E}^1 \in \GSYL$)
 (since $f$ is isometry)
\begin{lemma}\label{equivincl}
Let $P,Q \in \POL$ then $\overline{O}(P) \supseteq \overline{O}(Q)$ if and only if $\overline{O}({\cal C}_{P}^1) \supseteq \overline{O}({\cal C}_{Q}^1)$.
\end{lemma}
\begin{proof}
Assume $\overline{O}(P) \supseteq \overline{O}(Q)$ but $\overline{O}({\cal C}_{P}^1) \not\supseteq \overline{O}({\cal C}_{Q}^1)$. Therefore  That is $\overline{O^e}({\cal C}_{P}^1) \cap \GSYL \not\supseteq \overline{O^e}({\cal C}_{Q}^1) \cap \GSYL$. For any $x \in \overline{O^e}({\cal C}_{P}^1)\cap \GSYL$
\end{proof}
}

\section{Main result} \label{sec.main}

In this section we present the complete eigenstructures of generic $m\times n$ matrix polynomials of a fixed rank and grade $d$. First we reveal a key connection between $\overline{O}({\cal C}_{P}^1)$, where the closure in taken in $\GSYL^1_{d,m\times n}$, and $\overline{O^e}({\cal C}_{P}^1)$, where the closure is taken in $\PEN_{m_1 \times n_1}$ ($m_1 = m + n(d-1)$ and $n_1 = n d$) that will allow us to use Theorem \ref{th:improvedDeDo}.

\begin{lemma}\label{cl}
Let $P$ be an $m\times n$ matrix polynomial with grade $d$ and ${\cal C}_{P}^1$ be its first companion linearization then $\overline{O}({\cal C}_{P}^1)= \overline{O^e}({\cal C}_{P}^1) \cap \GSYL^1_{d,m\times n}$.
\end{lemma}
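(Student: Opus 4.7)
The plan is to establish the two inclusions separately, with the nontrivial work concentrated in the $\supseteq$ direction, which will be reduced to Theorem~\ref{thm-blockKron}. Throughout, I will use the identity
\[
\orb({\cal C}^1_P) = \orb^e({\cal C}^1_P) \cap \GSYL^1_{d,m\times n},
\]
which is immediate from the definition \eqref{linorb}: an element of $\orb^e({\cal C}^1_P)$ lies in $\orb({\cal C}^1_P)$ exactly when the strict equivalence $Q^{-1}{\cal C}^1_P R$ has the block structure of $\GSYL$.

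\medskip

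\noindent
\emph{Easy inclusion $\overline{\orb}({\cal C}^1_P) \subseteq \overline{\orb^e}({\cal C}^1_P)\cap \GSYL^1_{d,m\times n}$.} The subspace $\GSYL^1_{d,m\times n}$ is a finite-dimensional linear subspace of $\PEN_{m_1\times n_1}$, hence closed. Therefore the closure of $\orb({\cal C}^1_P)$ in $\GSYL$ coincides with its closure in $\PEN_{m_1\times n_1}$. Since $\orb({\cal C}^1_P)\subseteq \orb^e({\cal C}^1_P)$, taking the closure in $\PEN_{m_1 \times n_1}$ yields $\overline{\orb}({\cal C}^1_P)\subseteq \overline{\orb^e}({\cal C}^1_P)$, and since $\overline{\orb}({\cal C}^1_P)$ clearly sits inside $\GSYL$, the inclusion follows.

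\medskip

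\noindent
\emph{Hard inclusion $\overline{\orb^e}({\cal C}^1_P)\cap \GSYL^1_{d,m\times n}\subseteq \overline{\orb}({\cal C}^1_P)$.} Let ${\cal C}^1_Q \in \overline{\orb^e}({\cal C}^1_P)\cap \GSYL^1_{d,m\times n}$, so $Q$ is an $m\times n$ matrix polynomial of grade $d$. Pick a sequence $\mathcal{L}_k(\lambda)\in \orb^e({\cal C}^1_P)$ with $\mathcal{L}_k\to {\cal C}^1_Q$ in $\PEN_{m_1\times n_1}$. For all $k$ sufficiently large, $d({\cal C}^1_Q,\mathcal{L}_k)<\pi/(12\,d^{3/2})$, so Theorem~\ref{thm-blockKron} (applied with $Q$ playing the role of the polynomial $P$ in that statement) produces pencils ${\cal C}^1_{\widetilde Q_k}\in \GSYL^1_{d,m\times n}$, strictly equivalent to $\mathcal{L}_k$, with
\[
d({\cal C}^1_Q,{\cal C}^1_{\widetilde Q_k}) \;\le\; 4\,d\,(1+\|Q(\lambda)\|_F)\; d({\cal C}^1_Q,\mathcal{L}_k) \;\longrightarrow\; 0.
\]
Each ${\cal C}^1_{\widetilde Q_k}$ is strictly equivalent to $\mathcal{L}_k$, which in turn is strictly equivalent to ${\cal C}^1_P$; therefore ${\cal C}^1_{\widetilde Q_k}\in \orb^e({\cal C}^1_P)\cap \GSYL^1_{d,m\times n}=\orb({\cal C}^1_P)$. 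Since ${\cal C}^1_{\widetilde Q_k}\to {\cal C}^1_Q$ in $\GSYL^1_{d,m\times n}$, we conclude ${\cal C}^1_Q \in \overline{\orb}({\cal C}^1_P)$.

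\medskip

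\noindent
\emph{Where the difficulty sits.} The essential point is that an arbitrary limit in $\PEN_{m_1\times n_1}$ of strict equivalents of ${\cal C}^1_P$ need not be realised by a sequence that stays inside the generalized Sylvester subspace $\GSYL^1_{d,m\times n}$, even if the limit itself does. Theorem~\ref{thm-blockKron} solves precisely this obstruction by turning any small pencil perturbation of a first companion form into a nearby strictly equivalent pencil that is again a first companion form, with an explicit proportional error bound. Modulo invoking this result, the proof is a short topological argument and contains no further substantive steps.
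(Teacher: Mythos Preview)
Your proof is correct and follows essentially the same approach as the paper's own proof: both establish the nontrivial inclusion by taking a point $x={\cal C}^1_Q$ in $\overline{\orb^e}({\cal C}^1_P)\cap \GSYL$, choosing a sequence in $\orb^e({\cal C}^1_P)$ converging to it, and invoking Theorem~\ref{thm-blockKron} to replace each term by a strictly equivalent pencil in $\GSYL$ that still converges to $x$. Your explicit justification of the easy inclusion via closedness of $\GSYL$ in $\PEN$ is a mild elaboration of what the paper simply calls ``obvious''.
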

\begin{proof}
By definition $O({\cal C}_{P}^1)= O^e({\cal C}_{P}^1) \cap \GSYL$ and thus $\overline{O}({\cal C}_{P}^1)= \overline{O^e({\cal C}_{P}^1) \cap \GSYL}$  (the closure here is taken in the space $\GSYL$). For any $x \in \overline{O^e}({\cal C}_{P}^1)\cap \GSYL$ there exists a sequence $\{y_i\} \subset O^e({\cal C}_{P}^1)$ such that $y_i \to x$. Since $x \in \GSYL$, for any $i$ large enough, the pencil $y_i$ is a small perturbation of $x$ and, according to Theorem \ref{thm-blockKron},
there exists a pencil $z_i \in \GSYL$ strictly equivalent to $y_i$ (and, so, to ${\cal C}_{P}^1$) and such that $d(x,z_i) \leq 4\, d\, (1+||x||_F) \, d(x, y_i)$. Therefore, we have proved that there exists a sequence $\{ z_i \} \subset O^e({\cal C}_{P}^1)\cap \GSYL$ such that $z_i \to x$. Thus $x \in \overline{O^e({\cal C}_{P}^1) \cap \GSYL}$, and  $\overline{O^e}({\cal C}_{P}^1) \cap \GSYL \subseteq\overline{O^e({\cal C}_{P}^1) \cap \GSYL}$. Since, obviously,  $\overline{O^e({\cal C}_{P}^1) \cap \GSYL} \subseteq \overline{O^e}({\cal C}_{P}^1) \cap \GSYL$, we have that $\overline{O^e({\cal C}_{P}^1) \cap \GSYL}= \overline{O^e}({\cal C}_{P}^1) \cap \GSYL$, and the result is proved.
\end{proof}

With Lemma \ref{cl} at hand, we state and prove the main result of this paper.

\begin{theorem} \label{mainth}
Let $m,n,r$ and $d$ be integers such that $m,n \geq 2$, $d \geq 1$ and $1 \leq r \leq \min\{m,n\}-1$. Define $rd + 1$ complete eigenstructures ${\cal K}_a$ of matrix polynomials without elementary divisors at all, with left minimal indices $\beta$ and $\beta +1$, and with right minimal indices $\alpha$ and $\alpha +1$, whose values and numbers are as follows:
\begin{equation}
\label{kcilist}
{\cal K}_a: \{\underbrace{\alpha+1, \dots , \alpha+1}_{s},\underbrace{\alpha, \dots , \alpha}_{n-r-s}, \underbrace{\beta+1, \dots , \beta+1}_{t}, \underbrace{\beta, \dots , \beta}_{m-r-t}\}
\end{equation}
for $a = 0,1,\dots,rd$, where $\alpha = \lfloor a / (n-r) \rfloor$, $s = a \mod (n-r)$, $\beta = \lfloor (rd-a)/(m-r) \rfloor$,
and $t = (rd-a) \mod (m-r)$. Then,
\begin{itemize}
\item[(i)] There exists an $m \times n$ complex matrix polynomial $K_a$ of degree exactly $d$ and rank exactly $r$ with each of the complete eigenstructure ${\cal K}_a$;

\item[(ii)]  For every $m \times n$ polynomial $M$ of grade $d$ with rank at most $r$, there exists an integer a such that $\overline{O}(K_a)\supseteq \overline{O}(M)$;

\item[(iii)]  $\overline{O}(K_a) \not\supseteq \overline{O}(K_{a'})$ whenever $a\neq a'$;

\item[(iv)]  The set of $m\times n$ complex matrix polynomials of grade $d$ with rank at most $r$ is a closed subset of $\POL_{d,m\times n}$ equal to $\bigcup_{0 \leq a \leq rd} \overline{O}(K_a)$.
\end{itemize}
\end{theorem}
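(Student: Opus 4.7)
The plan is to lift the problem to matrix pencils of size $m_1 \times n_1$, where $m_1 = m+n(d-1)$ and $n_1 = nd$, via the first companion form $f\colon P \mapsto {\cal C}_P^1$. The map $f$ is an isometry onto $\GSYL^1_{d,m\times n}$, and Lemma~\ref{cl} identifies $\overline{O}({\cal C}_P^1) = \overline{O^e}({\cal C}_P^1) \cap \GSYL^1_{d,m\times n}$, so closures in $\POL_{d,m\times n}$ correspond to closures in $\PEN_{m_1 \times n_1}$ intersected with $\GSYL^1_{d,m\times n}$. Since the first companion form preserves left minimal indices and elementary divisors while shifting every right minimal index by $+(d-1)$, an $m_1 \times n_1$ pencil is strictly equivalent to ${\cal C}_P^1$ for some grade-$d$ polynomial $P$ of rank $r$ iff its KCF has all right minimal indices $\geq d-1$ and rank equal to $r+n(d-1)$. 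Applying Theorem~\ref{th:improvedDeDo} with $r_1 = r+n(d-1)$ (so that $n_1-r_1=n-r$ and $m_1-r_1=m-r$) produces $r_1+1$ generic KCFs ${\cal K}_{a_1}(\lambda)$, of which exactly those with $\alpha_1 \geq d-1$, equivalently $a_1 \geq (d-1)(n-r)$, correspond to companion forms; this gives $r_1+1-(d-1)(n-r) = rd+1$ valid values. Reparametrizing $a = a_1 - (d-1)(n-r)$ yields $a \in \{0,1,\dots,rd\}$, and a direct computation gives $\alpha = \alpha_1 - (d-1)$, $s=s_1$, $\beta = \beta_1$, $t = t_1$, matching the parameters in the statement.

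\textbf{Parts (i) and (iii).} For (i), each valid ${\cal K}_{a_1}$ is strictly equivalent to the companion form of some matrix polynomial $K_a$ of grade $d$ whose complete eigenstructure is precisely ${\cal K}_a$ (no elementary divisors, prescribed minimal indices); that $K_a$ has degree exactly $d$ and rank exactly $r$ follows because the absence of infinite elementary divisors forces the leading coefficient of $K_a$ to have rank $r$. For (iii), a containment $\overline{O}(K_a) \supseteq \overline{O}(K_{a'})$ implies, via $f$ and Lemma~\ref{cl}, that ${\cal C}_{K_{a'}}^1 \in \overline{O^e}({\cal C}_{K_a}^1) = \overline{O^e}({\cal K}_{a_1})$, hence $\overline{O^e}({\cal K}_{a_1}) \supseteq \overline{O^e}({\cal K}_{a'_1})$; Theorem~\ref{th:improvedDeDo}(ii) then forces $a_1 = a'_1$, i.e.\ $a = a'$.

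\textbf{Parts (ii) and (iv).} For (ii), given $M$ with $\rank(M) \leq r$, Theorem~\ref{th:improvedDeDo}(i) supplies some $a_1$ with $\overline{O^e}({\cal K}_{a_1}) \supseteq \overline{O^e}({\cal C}_M^1)$, and the key step is to show this $a_1$ can be chosen in the valid range $a_1 \geq (d-1)(n-r)$. When $\rank(M) = r$, the rank-$r_1$ pencil ${\cal C}_M^1$ has sum of right minimal indices equal to $\sum_i \epsilon_i + (n-r)(d-1) \geq (n-r)(d-1)$, and within the rank-$r_1$ stratum this sum is precisely the $a_1$ of the unique orbit closure containing ${\cal C}_M^1$; when $\rank(M) < r$, a small perturbation of $M$ to a rank-$r$ polynomial $\widetilde{M}$, combined with the previous case and a subsequence argument through the finite set of possible $a_1$-values, gives a valid $a_1$ in the limit. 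Lemma~\ref{cl} and $f^{-1}$ then transfer the inclusion back to $\overline{O}(K_a) \supseteq \overline{O}(M)$. For (iv), the rank-$\leq r$ locus in $\POL_{d,m\times n}$ is closed (as the vanishing locus of all $(r+1)\times(r+1)$ minors of the coefficient matrices), it contains each $\overline{O}(K_a)$ because $K_a$ has rank $r$ and this locus is closed, and it is covered by the union thanks to (ii).

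\textbf{Main obstacle.} The principal difficulty is ensuring in part (ii) that the index $a_1$ produced by Theorem~\ref{th:improvedDeDo} can always be chosen to satisfy $a_1 \geq (d-1)(n-r)$. This is delicate because $\GSYL^1_{d,m\times n}$ is a proper subspace of $\PEN_{m_1 \times n_1}$, so a priori the orbit closure containing ${\cal C}_M^1$ might only be available for an invalid $a_1$; the rank-deficient subcase in particular calls for a perturbation-plus-limit argument to handle.
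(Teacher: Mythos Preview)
Your overall strategy matches the paper's: lift to pencils via the first companion form, use Lemma~\ref{cl} to relate closures in $\GSYL$ and $\PEN$, and invoke Theorem~\ref{th:improvedDeDo}. The substantive divergence is in how you handle the ``valid range'' issue in~(ii), which you correctly flag as the main obstacle. The paper does \emph{not} argue via the sum of right minimal indices or split into cases by rank. Instead, once some $a_1$ with $\overline{O^e}({\cal K}_{a_1}) \supseteq O^e({\cal C}_M^1)$ is supplied by Theorem~\ref{th:improvedDeDo}(i), it takes a sequence $y_i \in O^e({\cal K}_{a_1})$ converging to ${\cal C}_M^1 \in \GSYL$ and applies Theorem~\ref{thm-blockKron}: each $y_i$ is then strictly equivalent to some ${\cal C}_P^1 \in \GSYL$, and since all $y_i$ lie in a single orbit this $P$ is the same throughout, so ${\cal K}_{a_1}$ is \emph{automatically} strictly equivalent to a companion form. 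The explicit computation matching \eqref{cka} and \eqref{pg} then identifies which $K_a$ is obtained. This is cleaner than your route and handles rank $<r$ and rank $=r$ uniformly; your perturbation-plus-subsequence argument is correct but becomes unnecessary once Theorem~\ref{thm-blockKron} is used this way. Note also that your claim that $a_1$ is \emph{uniquely} determined by the sum of right minimal indices is false when ${\cal C}_M^1$ has elementary divisors (several $\overline{O^e}({\cal K}_{a_1})$ can then contain it), though existence with $a_1$ equal to that sum---which is all you need---does hold by the proof of \cite[Theorem~3.2]{DeDo08}.

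Two smaller points. For~(i), your ``iff'' characterization of which pencils are strictly equivalent to a first companion form is correct, but the ``if'' direction is not free: it is exactly the existence statement \cite[Theorem~3.3]{DeDV15}, which the paper invokes directly after verifying that the minimal indices in ${\cal K}_a$ sum to $rd$; your argument that absence of infinite elementary divisors forces degree exactly $d$ also ultimately rests on this. For~(iv), the rank-$\le r$ locus is closed, but not as ``the vanishing locus of all $(r+1)\times(r+1)$ minors of the coefficient matrices'': the normal rank of $P(\lambda)$ is governed by the $(r+1)\times(r+1)$ minors of $P(\lambda)$ \emph{as polynomials in $\lambda$}, so the closed condition is the vanishing of all $\lambda$-coefficients of these minors (each a polynomial in the entries of $A_0,\dots,A_d$), not the vanishing of minors of the individual $A_i$.
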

\begin{proof}
(i) Summing up all the minimal indices for each ${\cal K}_a$ in \eqref{kcilist} we have
\begin{align*}
&\sum_1^s (\alpha +1) + \sum_1^{n-r-s} \alpha + \sum_1^t (\beta +1) + \sum_1^{m-r-t} \beta
=\sum_1^{n-r} \alpha + s + \sum_1^{m-r} \beta + t \\
&= (n-r) \lfloor a / (n-r) \rfloor + s + (m-r)\lfloor (rd-a)/(m-r) \rfloor + t =a + rd -a = rd.
\end{align*}
By \cite[Theorem 3.3]{DeDV15}, for each $a$ there exists an $m\times n$ complex matrix polynomial of degree exactly $d$ and rank exactly $r$ that has the complete eigenstructure~${\cal K}_a$ \eqref{kcilist}.

(ii) For every $m \times n$ matrix polynomial $M$ of grade $d$ and rank at most $r$, the first companion form ${\cal C}_{M}^1$ has rank at most $r+n(d-1)$, because ${\cal C}_{M}^1$ is unimodularly equivalent to $M \oplus I_{n(d-1)}$.  Therefore, for each of such $M$ there exists an $(m+n(d-1)) \times nd$ matrix pencil $Q$, with rank $r+n(d-1)$, equal to one of the $\mathcal{K}_{a_1}(\lambda)$ pencils defined in Theorem \ref{th:improvedDeDo}, such that $\overline{O^e}(Q)\supseteq \overline{O^e}({\cal C}_{M}^1)$.
This means, in particular, that there exists a sequence $\{ y_i \} \subset O^e(Q)$ such that $y_i \to {\cal C}_{M}^1$ and, so, for any $i$ large enough, $y_i$ is a small perturbation of ${\cal C}_{M}^1$ and Theorem \ref{thm-blockKron} can be applied to the polynomial $M$ and $y_i$. From this, we obtain that $y_i$ is strictly equivalent to ${\cal C}_{P}^1$ for a certain polynomial $P$ of grade $d$ and size $m\times n$, which is independent of $i$ since $y_i \in O^e(Q)$. Then $O^e(Q) = O^e ({\cal C}_{P}^1)$ and ${\cal C}_{P}^1$ has rank $r+n(d-1)$, which is equivalent to say that $P$ has rank $r$. Thus $\overline{O^e}({\cal C}_{P}^1)\supseteq \overline{O^e}({\cal C}_{M}^1)$ and $\overline{O^e}({\cal C}_{P}^1) \cap \GSYL \supseteq \overline{O^e}({\cal C}_{M}^1) \cap \GSYL$. The latter is equivalent to $\overline{O^e({\cal C}_{P}^1) \cap \GSYL} \supseteq \overline{O^e({\cal C}_{M}^1) \cap \GSYL}$ by Lemma \ref{cl}, and, by definition, is also equivalent to $\overline{O}({\cal C}_{P}^1) \supseteq \overline{O}({\cal C}_{M}^1)$, which according to the discussion after \eqref{linorb}, is equivalent to $\overline{O}(P) \supseteq \overline{O}(M)$.
The remaining part of the proof is to show that the most generic matrix pencils of size $(m+n(d-1))\times nd$ and rank $n(d-1)+r$ (see Theorem \ref{th:improvedDeDo}) are strictly equivalent to the first companion forms of $m \times n$ matrix polynomials of grade $d$ and rank $r$ if and only if these matrix pencils are strictly equivalent to the first companion form of the polynomials with the complete eigenstructures ${\cal K}_a$ in $\eqref{kcilist}$.

For each matrix polynomial $K_a$ in part (i) the $(m+n(d-1)) \times nd$ matrix pencil ${\cal C}_{K_a}^1$ has the rank $n(d-1)+r$ and by \cite{DeDM12,DeDM14} the Kronecker canonical form of ${\cal C}_{K_a}^1$ is the direct sum of the following blocks:
\begin{equation} \label{cka}
{\cal C}_{K_a}^1: \{\underbrace{L_{\alpha+d}, \dots , L_{\alpha+d}}_{s},\underbrace{L_{\alpha+d-1}, \dots , L_{\alpha+d-1}}_{n-r-s}, \underbrace{L_{\beta+1}^T, \dots , L_{\beta+1}^T}_{t}, \underbrace{L_{\beta}^T, \dots , L_{\beta}^T}_{m-r-t}\}.
\end{equation}
We show that the Kronecker canonical form of ${\cal C}_{K_a}^1$ coincides with the Kronecker canonical form of one of the most generic matrix pencils of rank $r_1=n(d-1)+r$ and size $m_1 \times n_1$, where $m_1 = m + n(d-1)$ and $n_1 = n d$, given in Theorem \ref{th:improvedDeDo}:
\begin{equation} \label{pg}
\{\underbrace{L_{\alpha_1+1}, \dots , L_{\alpha_1+1}}_{s_1},\underbrace{L_{\alpha_1}, \dots , L_{\alpha_1}}_{n_1-r_1-s_1}, \underbrace{L_{\beta_1+1}^T, \dots , L_{\beta_1+1}^T}_{t_1}, \underbrace{L_{\beta_1}^T, \dots , L_{\beta_1}^T}_{m_1-r_1-t_1}\}.
\end{equation}
Or equivalently, we show that the numbers and the sizes of the $L$ and $L^T$ blocks in \eqref{cka} and \eqref{pg} coincide, i.e., $\alpha  + d - 1 = \alpha_1$, $s=s_1$, $n-r-s = n_1 - r_1 - s_1$, $\beta=\beta_1$, $t=t_1$, and $m-r-t= m_1 - r_1 - t_1$.

For the sizes of $L$ blocks we have
\begin{align} \label{alphasize1}
\alpha  + d - 1& =\left\lfloor \frac{a}{n-r} \right\rfloor + d - 1 =  \left\lfloor  \frac{(n-r)(d-1) + a}{n-r} \right\rfloor  \\
& = \left\lfloor  \frac{(n(d-1)+r -rd) + a}{nd - (n(d-1) + r)} \right\rfloor = \left\lfloor  \frac{a_1}{n_1 -  r_1} \right\rfloor = \alpha_1,
\label{alphasize2}
\end{align}
where $a_1 = (n(d-1)+r) -rd + a$. Since $a = 0,1,\dots,rd$, then $a_1 = (n(d-1)+r) -rd,(n(d-1)+r) -rd +1,\dots,n(d-1)+r$, or equivalently $a_1 = r_1 -rd,r_1 -rd +1,\dots, r_1$.

For the numbers of $L$ blocks $s$ and $n-r-s$, we have
\begin{align*}
s &= a \mod (n-r) = ((n-r)(d-1) + a) \mod (n-r) \\
&= ((n(d-1)+r) -rd + a) \mod (nd - (n(d-1) + r))\\
&=  a_1 \mod (n_1 - r_1) =s_1
\end{align*}
and
$$n-r-s = nd - n(d-1) -r -s= n_1 - r_1 - s_1.$$

Before checking the sizes and numbers of $L^T$ blocks, note that $rd-a = rd + n(d-1)+r - n(d-1)- r -a = n(d-1)+r - (n(d-1)+r-rd +a) = r_1-a_1$ and $m-r = m + n(d-1)-(n(d-1)+r) = m_1-r_1$.
Now for $\beta$, $t$, and $m-r-t$ we have
$$\beta = \left\lfloor \frac{rd-a}{m-r} \right\rfloor = \left\lfloor \frac{r_1-a_1}{m_1-r_1} \right\rfloor  = \beta_1,$$
$$
t = (rd-a) \mod (m-r) = (r_1-a_1) \mod (m_1-r_1) = t_1,
$$
and
$$m-r-t = m + n(d-1) - n(d-1) -r -t= m_1 - r_1 - t_1.$$

Therefore ${\cal C}_{K_a}^1$ is strictly equivalent to one of the $r_1+1$ most generic matrix pencils of rank $r_1$, obtained in Theorem \ref{th:improvedDeDo}, to be exact the one with $a_1= (n(d-1)+r) -rd + a$.

The most generic pencils in Theorem \ref{th:improvedDeDo} with $a_1 < (n(d-1)+r) -rd$ are not strictly equivalent to the first companion linearization of any $m\times n$ matrix polynomial of grade $d$ since their $L$ blocks have sizes smaller than $d-1$, see \eqref{alphasize1}--\eqref{alphasize2}.

(iii) From Theorem \ref{th:improvedDeDo}-(ii) and \cite{Pokr86} we have that $\overline{O^e}({\cal C}_{K_a}^1) \not\supseteq  O^e({\cal C}_{K_{a'}}^1)$. The boundary of $O^e({\cal C}_{K_a}^1)$ is a union of (possibly infinitely many) orbits of smaller dimensions \cite[Closed Orbit Lemma, p. 53]{Bore91}, thus
$$\overline{O^e}({\cal C}_{K_a}^1)  =\left(O^e({\cal C}_{K_a}^1) \cup \bigcup_{\eta} O^e(Q_\eta) \right).$$
Therefore
$\overline{O^e}({\cal C}_{K_a}^1)  \cap O^e({\cal C}_{K_{a'}}^1) = \emptyset$, which after intersection with $\GSYL$ and applying Lemma \ref{cl} results in $\overline{O^e({\cal C}_{K_a}^1) \cap \GSYL} \ \bigcap \ \left( O^e({\cal C}_{K_{a'}}^1) \cap \GSYL \right) = \emptyset$. Thus $\overline{O^e({\cal C}_{K_a}^1) \cap \GSYL} \not\supseteq \overline{O^e({\cal C}_{K_{a'}}^1) \cap \GSYL}$, i.e., $\overline{O}({\cal C}_{K_{a}}^1) \not\supseteq \overline{O}({\cal C}_{K_{a'}}^1)$ and $\overline{O}(K_{a}) \not\supseteq \overline{O}(K_{a'})$.

(iv) By (ii) any $m\times n$ complex matrix polynomial of grade $d$ with rank at most $r$ is in one of the $rd+1$ closed sets $\overline{O}(K_a)$. Thus (iv) holds, since the union of a finite number of closed sets is also a closed set.
\end{proof}

As in the case of pencils (see Theorems \ref{th:pencgenfull1} and \ref{th:pencgenfull2}), we complete Theorem \ref{mainth} with Theorems \ref{mainth-c1} and \ref{mainth-c2}, which cover the limiting case $r=\min \{ m, n \}$ when $m\ne n$ and display the {\em unique} generic complete eigenstructure of matrix polynomials of size $m\times n$, grade $d$, and arbitrary rank. Though very simple and not surprising, we believe that Theorems \ref{mainth-c1} and \ref{mainth-c2} are stated for the first time in the literature. We only prove Theorem \ref{mainth-c2} since it implies Theorem \ref{mainth-c1} just by transposition. Observe that the ommitted case $r=\min \{m,n\}$ when $m= n$ is very simple, since, in this situation, generically a matrix polynomial of grade $d$ is regular and has all  its $nd$ eigenvalues simple.

\begin{theorem} \label{mainth-c1}
Let $m,n$, $m<n$, and $d$ be positive integers and define the complete eigenstructure ${\cal K}_{rp}$ of a matrix polynomial without elementary divisors, without  left minimal indices, and with right minimal indices $\alpha$ and $\alpha +1$, whose values and numbers are as follows:
\begin{equation}
\label{kcilist1}
{\cal K}_{rp}: \{ \underbrace{\alpha+1, \dots , \alpha+1}_{s},\underbrace{\alpha, \dots , \alpha}_{n-m-s} \} \, ,
\end{equation}
where $\alpha = \lfloor md / (n-m) \rfloor$, $s = md \mod (n-m)$. Then,
\begin{itemize}
\item[(i)] There exists an $m \times n$ complex matrix polynomial $K_{rp}$ of degree exactly $d$ and rank exactly $m$ with the complete eigenstructure ${\cal K}_{rp}$;

\item[(ii)] $\overline{O}(K_{rp}) = \POL_{d,m\times n}$.
\end{itemize}
\end{theorem}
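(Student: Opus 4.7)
The plan is to mirror the strategy used in the proof of Theorem \ref{mainth}, but exploiting Theorem \ref{th:pencgenfull1} instead of Theorem \ref{th:improvedDeDo}, since in the case $m<n$ with $r=m$ we are in the full-rank rectangular pencil regime at the level of the first companion linearization.

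For part (i), I would first check that the sum of the right minimal indices in \eqref{kcilist1} equals $md$:
\[
s(\alpha+1) + (n-m-s)\alpha = (n-m)\left\lfloor \frac{md}{n-m}\right\rfloor + \bigl(md \bmod (n-m)\bigr) = md.
\]
Since there are no elementary divisors and no left minimal indices, this sum equals the total required for a polynomial of degree exactly $d$ and rank exactly $m$, so \cite[Theorem 3.3]{DeDV15} guarantees the existence of $K_{rp}$.

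For part (ii), I would pass through the first companion linearization via the isometry $f:\POL_{d,m\times n}\to\GSYL^1_{d,m\times n}$: showing $\overline{O}(K_{rp})=\POL_{d,m\times n}$ is equivalent to showing $\overline{O}(\mathcal{C}^1_{K_{rp}})=\GSYL^1_{d,m\times n}$, which by Lemma \ref{cl} is equivalent to
\[
\overline{\orb^e}(\mathcal{C}^1_{K_{rp}}) \cap \GSYL^1_{d,m\times n} = \GSYL^1_{d,m\times n},
\]
i.e.\ to $\overline{\orb^e}(\mathcal{C}^1_{K_{rp}})=\PEN_{m_1\times n_1}$, where $m_1=m+n(d-1)$ and $n_1=nd$. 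Since $m<n$ we have $m_1<n_1$, so Theorem \ref{th:pencgenfull1} applies and it suffices to verify that the KCF of $\mathcal{C}^1_{K_{rp}}$ coincides with $\mathcal{K}_{right}(\lambda)$ for the parameters $m_1,n_1$.

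The verification is a direct calculation using that the first companion form preserves elementary divisors (here there are none), preserves left minimal indices (here there are none), and shifts each right minimal index by $d-1$. Thus $\mathcal{C}^1_{K_{rp}}$ has only right minimal indices: $(\alpha+d)$ repeated $s$ times and $(\alpha+d-1)$ repeated $n-m-s$ times. It remains to check
\[
\alpha + d - 1 = \left\lfloor \frac{md}{n-m}\right\rfloor + (d-1) = \left\lfloor \frac{md + (d-1)(n-m)}{n-m}\right\rfloor = \left\lfloor \frac{m_1}{n_1-m_1}\right\rfloor = \alpha_1,
\]
together with $s = md \bmod (n-m) = m_1 \bmod (n_1-m_1) = s_1$, and that the multiplicities $s$ and $n-m-s$ match $s_1$ and $n_1-m_1-s_1$. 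Once this is established, $\mathcal{C}^1_{K_{rp}}$ is strictly equivalent to $\mathcal{K}_{right}$, and the chain above closes. No step is genuinely hard; the only point requiring care is the bookkeeping of the shift-by-$(d-1)$ on the right minimal indices and confirming it is precisely the shift needed to turn $\lfloor md/(n-m)\rfloor$ into $\lfloor m_1/(n_1-m_1)\rfloor$, which is the numerical content of the argument.
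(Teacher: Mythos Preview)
Your proof is correct and follows the same overall strategy the paper uses for the companion result (Theorem~\ref{mainth-c2}): verify the index-sum condition to invoke \cite[Theorem~3.3]{DeDV15}, then pass to the first companion form, use Lemma~\ref{cl}, and identify the KCF of $\mathcal{C}^1_{K_{rp}}$ with the generic full-rank pencil of Theorem~\ref{th:pencgenfull1}. The one difference is organizational: the paper does not prove Theorem~\ref{mainth-c1} directly but deduces it from Theorem~\ref{mainth-c2} by transposition. The reason that route is slightly more economical is that the first companion form preserves \emph{left} minimal indices, so in the $m>n$ case the eigenstructure of $\mathcal{C}^1_{K_{\ell p}}$ is literally \eqref{kcilist2} and no shift computation is needed; your direct approach for $m<n$ must track the shift of the right minimal indices by $d-1$, which you do correctly. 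One small wording point: the step ``$\overline{\orb^e}(\mathcal{C}^1_{K_{rp}})\cap\GSYL=\GSYL$ \emph{i.e.} $\overline{\orb^e}(\mathcal{C}^1_{K_{rp}})=\PEN_{m_1\times n_1}$'' is only a sufficient condition, not an equivalence, but since you establish the stronger statement via Theorem~\ref{th:pencgenfull1} this does not affect the argument.
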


\begin{theorem} \label{mainth-c2}
Let $m,n$, $m>n$, and $d$ be positive integers and define the complete eigenstructure ${\cal K}_{\ell p}$ of a matrix polynomial without elementary divisors, without right minimal indices, and with left minimal indices $\beta$ and $\beta +1$, whose values and numbers are as follows:
\begin{equation}
\label{kcilist2}
{\cal K}_{\ell p}: \{ \underbrace{\beta+1, \dots , \beta+1}_{t},\underbrace{\beta, \dots , \beta}_{m-n-t} \} \, ,
\end{equation}
where $\beta = \lfloor nd / (m-n) \rfloor$, $t = nd \mod (m-n)$. Then,
\begin{itemize}
\item[(i)] There exists an $m \times n$ complex matrix polynomial $K_{\ell p}$ of degree exactly $d$ and rank exactly $n$ with the complete eigenstructure ${\cal K}_{\ell p}$;

\item[(ii)] $\overline{O}(K_{\ell p}) = \POL_{d,m\times n}$.
\end{itemize}
\end{theorem}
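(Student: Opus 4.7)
The plan is to follow the same strategy as in the proof of Theorem \ref{mainth}: pass to the first companion form and transfer the generic description of rectangular pencils (Theorem \ref{th:pencgenfull2}) back to polynomials via Lemma \ref{cl} and the isometry $f$. The situation here is considerably simpler because there is only one candidate generic eigenstructure, so no union over several orbits has to be managed.

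For part (i), I would first check the admissibility of the prescribed left minimal indices by verifying that their sum equals $nd$, the expected total for a rank-$n$ polynomial of grade $d$ with no eigenvalues and no right minimal indices. The Euclidean division $nd = (m-n)\beta + t$ gives
\[
t(\beta+1) + (m-n-t)\beta = (m-n)\beta + t = nd \, ,
\]
so \cite[Theorem 3.3]{DeDV15} produces the required $K_{\ell p}$ of degree exactly $d$ and rank exactly $n$ whose complete eigenstructure is ${\cal K}_{\ell p}$.

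For part (ii), I would compute the Kronecker canonical form of ${\cal C}^1_{K_{\ell p}}$ and show that it is exactly the pencil ${\cal K}_{left}$ of Theorem \ref{th:pencgenfull2}. Since $K_{\ell p}$ has rank $n$, the companion pencil ${\cal C}^1_{K_{\ell p}}$ has rank $n + n(d-1) = nd$, hence full column rank, so it has no right minimal indices; it also inherits the absence of elementary divisors from $K_{\ell p}$ and, as recalled in the excerpt after \eqref{2ndform}, retains its left minimal indices unchanged. Setting $m_1 = m + n(d-1)$ and $n_1 = nd$, so that $m_1 - n_1 = m - n$, we have
\[
\left\lfloor \frac{n_1}{m_1 - n_1} \right\rfloor = \left\lfloor \frac{nd}{m-n} \right\rfloor = \beta
\qquad \text{and} \qquad
n_1 \bmod (m_1 - n_1) = nd \bmod (m-n) = t \, ,
\]
so ${\cal C}^1_{K_{\ell p}}$ is strictly equivalent to ${\cal K}_{left}$. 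Theorem \ref{th:pencgenfull2} then yields $\overline{\orb^e}({\cal C}^1_{K_{\ell p}}) = \PEN_{m_1 \times n_1}$.

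To close the argument, intersecting with $\GSYL^1_{d,m\times n}$ and applying Lemma \ref{cl} give
\[
\overline{\orb}({\cal C}^1_{K_{\ell p}}) \;=\; \overline{\orb^e}({\cal C}^1_{K_{\ell p}}) \cap \GSYL^1_{d,m\times n} \;=\; \GSYL^1_{d,m\times n} \, ,
\]
and transporting this equality through the isometry $f \colon \POL_{d,m\times n} \to \GSYL^1_{d,m\times n}$ introduced in Section \ref{pencils} yields $\overline{\orb}(K_{\ell p}) = \POL_{d,m\times n}$. The only mildly delicate point is the parameter matching $(\beta,t) = (\beta_1,t_1)$; unlike in the proof of Theorem \ref{mainth}, no shift by $d-1$ appears because the first companion form preserves left minimal indices verbatim, so no new ideas beyond Lemma \ref{cl} and Theorem \ref{th:pencgenfull2} are required.
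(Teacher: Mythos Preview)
Your proposal is correct and follows essentially the same approach as the paper: verify the index sum equals $nd$ and invoke \cite[Theorem 3.3]{DeDV15} for (i), then identify ${\cal C}^1_{K_{\ell p}}$ with ${\cal K}_{left}$ from Theorem~\ref{th:pencgenfull2}, intersect with $\GSYL$ via Lemma~\ref{cl}, and pull back through $f$ for (ii). Your version is slightly more explicit in checking the parameter match $(\beta,t)=(\beta_1,t_1)$ and in justifying why the left minimal indices are preserved, but the argument is the same.
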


\begin{proof} We just sketch the proof since it follows in a very simplified way the proof of Theorem \ref{mainth}. The proof of (i) follows from summing up all the indices in \eqref{kcilist2} to get
\[
t (\beta + 1) + (m-n-t) \beta = (m-n) \beta + t = nd \, .
\]
Then, \cite[Theorem 3.3]{DeDV15} guarantees that there exists an $m\times n$ matrix polynomial $K_{\ell p}$ of degree $d$, rank $n$, and with the complete eigenstructure \eqref{kcilist2}. For proving (ii), note that the first companion form $\mathcal{C}_{K_{\ell p}}^1$ has exactly the complete eigenstructure \eqref{kcilist2}, which corresponds precisely to the KCF $\mathcal{K}_{left} (\lambda)$ in \eqref{3max} if $m_1 = m + n(d-1)$ and $n_1 = nd$. Therefore, we get from Theorem \ref{th:pencgenfull2} that any $m\times n$ matrix polynomial $M$ of grade $d$ satisfies
${\cal C}_{M}^1 \in \overline{O^e}({\cal C}_{K_{\ell p}}^1)$. So, ${\cal C}_{M}^1 \in \overline{O^e}({\cal C}_{K_{\ell p}}^1) \cap \GSYL = \overline{O}({\cal C}_{K_{\ell p}}^1)$, where Lemma \ref{cl} has been used in the last equality. This proves (ii) by applying the $f^{-1}$ bijective isommetry as explained in the paragraph after \eqref{linorb}.
\end{proof}

\section{Codimensions of generic sets of matrix polynomials with fixed rank and fixed degree} \label{sec.codim}

In this section we consider the codimensions inside the space $\POL_{d, m\times n}$ of the sets of matrix polynomials $O(K_a)$, $a = 0,1,\ldots, r d,$ identified in Theorem~\ref{mainth}. More precisely, we will determine the codimensions of the orbits $\orb({\cal C}^1_{K_a})$ defined in \eqref{linorb} inside the space $\GSYL_{d,m\times n}^1$.
These codimensions provide us a necessary (but not sufficient) condition for constructing orbit closure hierarchy (stratification) graphs \cite{Dmyt15,DJKV15,DmKa14,EdEK97,JoKV13}, since the boundary of an orbit consists of orbits with higher codimensions.
Recall that, for any $P \in \POL_{d, m\times n}$, $\dim \orb^{e}({\cal C}^1_{P}) : = \dim \tsp^{e} ({\cal C}^1_{P})$ and $\cod \orb^{e}({\cal C}^1_{P}) : = \dim \nsp^{e}({\cal C}^1_{P})$, where $\tsp^{e} ({\cal C}^1_{P})$ and $\nsp^{e}({\cal C}^1_{P})$ denote, respectively, the tangent and normal spaces to the orbit $\orb^{e} ({\cal C}^1_{P})$ at the point ${\cal C}^1_{P}$, and $\dim$ and $\cod$ stand for dimension and codimension respectively. By \cite[Lemma~9.2]{JoKV13}, $\orb({\cal C}^1_{P})$ is a manifold in the matrix pencil space $\PEN_{m_1 \times n_1}$, where $m_1 = m + n (d-1)$ and $n_1 = nd$.
By \cite{DJKV15,JoKV13} we have
$
\cod \orb({\cal C}^1_{P})=
\cod \orb^{e}({\cal C}^1_{P}),
$
where the codimension of $\orb({\cal C}^1_{P})$ is considered in the space $\GSYL_{d, m\times n}^1$ and the codimension of $\orb^{e}({\cal C}^1_{P})$ in $\PEN_{m_1 \times n_1}$.
Define
$
\cod \orb (P) :=
\cod \orb({\cal C}^1_{P})\, .
$
These codimensions are computed via the Kronecker canonical form of ${\cal C}^1_{P}$ in \cite{DeEd95} and implemented in the MCS Toolbox \cite{DmJK13, Joha06}.
Note that, Theorem \ref{th-codimensions} shows that the codimensions of different $\orb (K_a)$ are distinct if $m \neq n$.

From the discussion above we have that
$
\cod \orb (K_a) =  \cod \orb^{e}({\cal C}^1_{K_a}).
$
In addition, recall that in the proof of Theorem \ref{mainth} we have seen that ${\cal C}_{K_a}^1$ is strictly equivalent to one of the $r_1+1$ most generic matrix pencils of rank $r_1$ presented in Theorem \ref{th:improvedDeDo}, to be exact the one with $a_1= (n(d-1)+r) -rd + a$. This allows us to obtain Theorem \ref{th-codimensions} from \cite[Theorem 3.3]{DeDo08}
just by replacing $m,n,r,a$ in \cite{DeDo08} by $m_1 = m + n (d-1), n_1 = nd, r_1 = r + n (d-1)$, and $a_1= (n(d-1)+r) -rd + a$, respectively, and performing some elementary simplifications that are explained in the proof below.

\begin{theorem} \label{th-codimensions}
Let $m,n,r$ and $d$ be integers such that $m,n \geq 2$, $d \geq 1$ and $1 \leq r \leq \min\{m,n\}-1$ and let $K_a, a = 0,1, \dots , rd$, be the $rd+1$ matrix polynomials with the complete eigenstructures \eqref{kcilist}.
Then the codimension of $O(K_a)$ in $\POL_{d,m\times n}$ is
$(n-r)(m(d+1)-r) + a (m-n).$
\end{theorem}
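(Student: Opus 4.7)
The plan is to follow the roadmap laid out in the paragraph immediately preceding the theorem, namely to translate the codimension computation into one for the strict-equivalence orbit of the associated companion pencil and then invoke \cite[Theorem 3.3]{DeDo08}.

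First I would observe that by the discussion right before the theorem we already have
\[
\cod \orb(K_a) \;=\; \cod \orb({\cal C}^1_{K_a}) \;=\; \cod \orb^e({\cal C}^1_{K_a}),
\]
the first equality being the definition of $\cod \orb(K_a)$ via the companion form and the second being \cite[Lemma 9.2]{JoKV13} combined with the result of \cite{DJKV15,JoKV13} that codimensions of orbits in $\GSYL_{d,m\times n}^1$ coincide with codimensions of the ambient strict-equivalence orbits in $\PEN_{m_1\times n_1}$. Thus the task reduces to computing $\cod \orb^e({\cal C}^1_{K_a})$ inside $\PEN_{m_1\times n_1}$.

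Next I would invoke the identification established in part (ii) of the proof of Theorem \ref{mainth}: the pencil ${\cal C}^1_{K_a}$ is strictly equivalent to the pencil ${\cal K}_{a_1}$ from Theorem \ref{th:improvedDeDo} with the parameters
\[
m_1 = m+n(d-1), \quad n_1 = nd, \quad r_1 = r+n(d-1), \quad a_1 = (n(d-1)+r)-rd + a.
\]
Since strictly equivalent pencils have the same orbit, $\cod \orb^e({\cal C}^1_{K_a}) = \cod \orb^e({\cal K}_{a_1})$, and the latter codimension is given in closed form by \cite[Theorem 3.3]{DeDo08} as a formula in $m_1,n_1,r_1,a_1$.

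The last step is the substitution and simplification. The key identities are
\[
n_1 - r_1 = n-r, \qquad m_1 - r_1 = m-r, \qquad m_1 - n_1 = m-n, \qquad a_1 = a + n(d-1)+r-rd,
\]
which convert the \cite{DeDo08} expression into $(n-r)(m-r) + (\text{terms in } a_1 \text{ and } r_1)$ and then, after collecting the contributions proportional to $m-n$ and to $n-r$, into $(n-r)(m(d+1)-r) + a(m-n)$. The only non-routine part is verifying this last algebraic simplification; I expect the $a_1(m_1-n_1)$ term to produce the $a(m-n)$ summand together with a compensating constant, while the remaining purely-structural terms in the \cite{DeDo08} formula combine with $(m-r)(n-r)$ to yield $(n-r)(m(d+1)-r)$. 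This is the main (though still elementary) obstacle; once checked, the proof is complete.
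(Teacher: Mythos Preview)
Your proposal is correct and follows essentially the same route as the paper: both reduce to the strict-equivalence orbit codimension of ${\cal C}^1_{K_a}$ via the identity $\cod \orb(K_a)=\cod \orb^e({\cal C}^1_{K_a})$, identify ${\cal C}^1_{K_a}$ with the pencil ${\cal K}_{a_1}$ of Theorem~\ref{th:improvedDeDo} using the parameter correspondence from the proof of Theorem~\ref{mainth}, apply \cite[Theorem 3.3]{DeDo08} with $m_1,n_1,r_1,a_1$ in place of $m,n,r,a$, and then simplify. The paper carries out the final algebra explicitly, starting from $(n_1-r_1)(2m_1-r_1)+a_1(m_1-n_1)=(n-r)(2m+n(d-1)-r)+((n(d-1)+r)-rd+a)(m-n)$ and reducing it to the stated expression, which is precisely the verification you flagged as the remaining elementary obstacle.
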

\begin{proof} Theorem 3.3 in \cite{DeDo08} yields directly that the codimension of $O(K_a)$ is $(n-r)(2m+n(d-1)-r) + ((n(d-1)+r) -rd + a) (m-n)$, which can be simplified as follows:
\begin{align*}
&(n-r)(2m+n(d-1)-r) + ((n(d-1)+r) -rd + a) (m-n)\\
&\phantom{a}=(n-r)(2m-r)+(n-r)n(d-1)+n(d-1)(m-n)-r(d-1)(m-n)+ a(m-n) \\
&\phantom{a} =(n-r)(2m-r)+(n-r)n(d-1)+(n-r)(d-1)(m-n)+ a(m-n) \\
&\phantom{a}=(n-r)(2m-r+n(d-1)+(d-1)(m-n))+ a(m-n) \\
&\phantom{a} =(n-r)((d+1)m -r)+ a(m-n).
\end{align*}
\end{proof}
\begin{remark}
In \cite[Theorem 3.3]{DeDo08} the codimension of ${\cal P}^{m\times n}_{r,1}$ is computed by taking the least codimension of all the irreducible components $\overline{\orb^e}({\cal K}_{a_1} )$ (see~\eqref{max}) of ${\cal P}^{m\times n}_{r,1}$. Since the irreducibility of $O(K_a)$ is not shown, we skip talking about the codimensions of ${\cal P}^{m\times n}_{r,d}$ in Theorem \ref{th-codimensions}.
\end{remark}
\begin{remark}[All strict equivalence orbits of the Fiedler linearizations of a matrix polynomial $P$ have the same codimensions] \label{deponlin} The results in this paper have been obtained through the first Frobenius companion linearization ${\cal C}^1_{P}$. However, it is interesting to emphasize that the same results can be obtained by using any other Fiedler linearization \cite{DeDM12}
and, in particular, note that $\cod \orb (P)$ does not depend on the choice of Fiedler linearization for any $P\in \POL_{d,m\times n}$.
The complete eigenstructures of the
Fiedler linearizations of the same matrix polynomial $P$ differ from each other only by the sizes of minimal indices \cite{DeDM12}: each left minimal index of the linearization is shifted with respect to the corresponding left minimal index of $P$ by a certain number $c(\sigma)$, which is equal for all left minimal indices. Analogously, each right minimal index of the linearization is shifted with respect to the corresponding right minimal index of $P$ by a certain number $i(\sigma)$, which is equal for all right minimal indices, and $c(\sigma) + i(\sigma)=d-1$ (we have no need to define $c(\sigma)$ and $i(\sigma)$ here but a curious reader may find the definition, e.g., in \cite{DeDM12}).
For any matrix polynomial $P$, the codimensions of all Fiedler pencils are equal to each other, see \cite[Theorem 2.2]{DeEd95} and note that $c(\sigma)$ and $i(\sigma)$ do not affect the difference of any two shifted left minimal indices ${\varepsilon_1 + c(\sigma)}$ and ${\varepsilon_2 + c(\sigma)}$, the difference of any two shifted right minimal indices ${\eta_1 + i(\sigma)}$ and ${\eta_2 + i(\sigma)}$, or any sum $\varepsilon_k + c(\sigma) + \eta_k + i(\sigma) (= \varepsilon_k + \eta_k + d-1)$.
\end{remark}

\section*{Acknowledgements}
The authors are thankful to Fernando De Ter\'an, Stefan Johansson, Bo K\r{a}gstr\"om, and Volker Mehrmann for the useful discussions on the subject of this paper.

The work of Andrii Dmytryshyn was supported by the Swedish Research Council (VR) under grant E0485301, and by eSSENCE (essenceofescience.se), a strategic collaborative e-Science programme funded by the Swedish Research Council.

The work of Froil\'an M. Dopico was supported by ``Ministerio de Econom\'{i}a, Industria y Competitividad of Spain'' and ``Fondo Europeo de Desarrollo Regional (FEDER) of EU'' through grants MTM-2015-68805-REDT and MTM-2015-65798-P (MINECO/FEDER, UE).

{\small
\bibliographystyle{abbrv}

}
\end{document}